\DeclareRobustCommand{\rvdots}{%
	\vbox{
		\baselineskip4\p@\lineskiplimit\z@
		\kern-\p@
		\hbox{.}\hbox{.}\hbox{.}
}}
\newtheorem{theoremAlph}{Theorem}
\newtheorem{corollaryAlph}[theoremAlph]{Corollary}
\newtheorem{theorem}{Theorem}[section]
\newtheorem{lemma}[theorem]{Lemma}	
\newtheorem{proposition}[theorem]{Proposition}
\theoremstyle{definition}
\newtheorem{definition}[theorem]{Definition} 
\newtheorem{remark}[theorem]{Remark}
\theoremstyle{definition} 
\newtheorem*{ack}{Acknowledgements}
\numberwithin{equation}{section}
\newcommand{\C}{\mathbb{C}} 
\newcommand{\R}{\mathbb{R}} 
\newcommand{\Z}{\mathbb{Z}} 
\newcommand{\N}{\mathbb{N}} 
\newcommand{\Quat}{\mathbb{H}}
\newcommand{\II}{\mathrm{I\!I}}
\newcommand{\Ric}{\textup{Ric}}
\newcommand{\ttimes}{\mathrel{\widetilde{\times} }}
\DeclareRobustCommand*\uell{\mathpalette\@uell\relax}
\newcommand*\@uell[2]{
	\setbox0=\hbox{$#1\ell$}
	\setbox1=\hbox{\rotatebox{10}{$#1\ell$}}
	\dimen0=\wd0 \advance\dimen0 by -\wd1 \divide\dimen0 by 2
	\mathord{\lower 0.1ex \hbox{\kern\dimen0\unhbox1\kern\dimen0}}
}
\newcommand{\mylabel}[2]{#2\def\@currentlabel{#2}\label{#1}}
\begin{document}
	\title[\rmfamily Examples of tangent cones of non-collapsed Ricci limit spaces]{\rmfamily Examples of tangent cones of non-collapsed Ricci limit spaces}
	\date{\today}
	\subjclass[2020]{53C20, 53C21, 54E35}
	\keywords{Ricci limit space, tangent cone, positive Ricci curvature, 4-manifold}
	\author{Philipp Reiser}
	\address{Department of Mathematics, University of Fribourg, Switzerland}
	\email{\href{mailto:philipp.reiser@unifr.ch}{philipp.reiser@unifr.ch}}
	
	\normalem
	
	\begin{abstract}
		We give new examples of manifolds that appear as cross sections of tangent cones of non-collapsed Ricci limit spaces. It was shown by Colding--Naber that the homeomorphism types of the tangent cones of a fixed point of such a space do not need to be unique. In fact, they constructed an example in dimension 5 where two different homeomorphism types appear at the same point. In this note, we extend this result and construct limit spaces in all dimensions at least 5 where any finite collection of manifolds that admit \emph{core metrics}, a type of metric introduced by Perelman and Burdick to study Riemannian metrics of positive Ricci curvature on connected sums, can appear as cross sections of tangent cones of the same point.
	\end{abstract}

	\maketitle

	\section{Introduction and Main Results}
	
	In this note, we consider pointed Gromov--Hausdorff limits $(Y,d_Y,y)$ of sequences of pointed $n$-dimensional Riemannian manifolds $(M_i,g_i,x_i)$ with a lower Ricci curvature bound, i.e.\
	\[ \Ric(g_i)\geq -(n-1) \]
	for all $i$. Additionally, we require that the limit is non-collapsed, i.e.\ there exists $v>0$ such that
	\[ \textrm{vol}(B_1(p_i))\geq v \]
	for all $i$. We call such a space $(Y,d_Y,p)$ a \emph{non-collapsed Ricci limit space}. The structure of non-collapsed Ricci limit spaces, or Ricci limit spaces in general, has been studied extensively, see e.g.\ \cite{BPS24}, \cite{Ch01}, \cite{CC97,CC00,CC00a}, \cite{CJN21}, \cite{CN13a,CN15}, \cite{Co97}, \cite{CN13}, and the references therein.
	
	When studying the structure of the limit space $Y$, a central role is played by its \emph{tangent cones}. A tangent cone at $x\in Y$ is the pointed Gromov--Hausdorff limit of a sequence $(Y,R_i^{-1}d_Y,x)$, where $R_i\to 0$ as $i\to\infty$. By Gromov's precompactness theorem, any such sequence has a converging subsequence. Moreover, if $Y$ is non-collapsed, it was shown by Cheeger--Colding \cite{CC97} that every tangent cone of $Y$ is a metric cone, and it follows from work of Ketterer \cite{Ke15} that the cross section of the cone satisfies the curvature dimension condition $\mathrm{CD}(n-2,n-1)$.
	
	However, even in the non-collapsed case, the tangent cones at a point obtained from different sequences $R_i$ do not need to coincide. This was first demonstrated by Perelman \cite{Pe97a} and Cheeger--Colding \cite{CC97}, where they constructed a family of metrics on $S^3$ whose cones all appear as tangent cones of the same point of a non-collapsed Ricci limit space. Subsequently, Colding--Naber \cite{CN13} gave further examples, including cones over $S^{n-1}$ that isometrically split off precisely $\R^k$ for all $0\leq k\leq n-2$, and cones whose cross sections are not even homeomorphic. The latter was realized by a $5$-dimensional limit space that contains a point with two tangent cones whose cross sections are given by $S^4$ and $\C P^2\#(-\C P^2)$, respectively. We note that this is in strong contrast to the situation of non-collapsed limits of manifolds with a lower sectional curvature bound, where all tangent cones are unique (see \cite{Pe91}, \cite{Ka07}) and all spaces of directions are homeomorphic to spheres (see \cite{Ka02}).
	
	The goal of this note is to provide further examples of non-collapsed Ricci limit spaces with non-homeomorphic tangent cones. To formulate our main result, we first need to recall the notion of \emph{core metrics}.
	
	\begin{definition}
		A Riemannian metric $g$ of positive Ricci curvature on an $n$-dimensional smooth manifold $M$ is a \emph{core metric} if there exists an embedded disc $D^n\subseteq M$ such that the induced metric on the boundary of $M\setminus{D^n}^\circ$ is the round metric on $S^{n-1}$ and its second fundamental form is strictly positive.
	\end{definition}
	
	Based on work by Perelman \cite{Pe97}, core metrics were introduced by Burdick \cite{Bu19} in the context of preserving positive Ricci curvature along connected sums. The known examples of manifolds with core metrics are given as follows:
	\begin{enumerate}
		\item The sphere $S^n$ and the compact rank one symmetric spaces $\C P^n$, $\mathbb{H}P^n$ and $\mathbb{O}P^2$ (see \cite{Bu19}, \cite{Pe97}),
		\item Linear sphere bundles and projective bundles with fibre $\C P^n$, $\mathbb{H}P^n$ or $\mathbb{O}P^2$ over manifolds with core metrics (see \cite{Bu20}, \cite{Re23,Re24a}),
		\item Products of manifolds with core metrics (see \cite{Re24a}),
		\item Connected sums of manifolds with core metrics (see \cite{Bu20a}),
		\item Manifolds obtained as boundaries of certain plumbings (see \cite{Bu19a}, \cite{Re23}),
		\item Certain manifolds that decompose as the union of two disc bundles, such as the Wu manifold $W^5$ (see \cite{Re24a}).
	\end{enumerate}
	Note also that a closed manifold that admits a core metric is simply-connected by \cite{La70}.
	
	Our main result is the following.
	
	\begin{theoremAlph}\label{T:core_tangent_cone}
		Let $M_1^n,\dots,M_\ell^n$ be closed, smooth, $n$-dimensional manifolds that admit core metrics. Then there exist a non-collapsed Ricci limit space $(Y^{n+1},d_Y,y)$ and (non-smooth) metrics $d_i$ on each $M_i$ such that the cones $C(M_i,d_i)$ all are tangent cones of $Y$ at $y$.
	\end{theoremAlph}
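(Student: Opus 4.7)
The plan is to extend the construction of Colding--Naber~\cite{CN13} by replacing their explicit positive-Ricci metrics on $S^4$ and $\C P^2\#(-\C P^2)$ with structures built from the core metrics on the $M_i$, and by cycling through the $\ell$ manifold types at a decreasing geometric sequence of scales accumulating at the basepoint. The approximating sequence will consist of smooth Riemannian $(n+1)$-manifolds $(N_s,g_s,x_s)$ with $\Ric(g_s)\geq-(n-1)$ and a uniform lower volume bound on unit balls, so that Gromov precompactness produces a non-collapsed Ricci limit $(Y,d_Y,y)$. The point $y$ will then carry the desired tangent cones as rescaling limits along appropriate geometric subsequences.

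For each $M_i^n$, I first define the cross-section metric $d_i$. After suitably scaling the core metric on $M_i\setminus(D^n_i)^\circ$ so that $\Ric\geq n-1$, I attach along its round boundary sphere $S^{n-1}$ a spherical cap of matching radius, which has constant Ricci $n-1$ and totally geodesic equator. The strict positivity of the second fundamental form on the core side then ensures, via the standard gluing criterion for generalized Ricci lower bounds, that the resulting singular length metric $d_i$ on $M_i$ satisfies $\mathrm{CD}(n-1,n)$. By Ketterer's theorem cited in the introduction, $C(M_i,d_i)$ is then $\mathrm{CD}(0,n+1)$ and a legitimate candidate for a tangent cone.

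The geometric heart of the argument is the construction, for each $i$, of a smooth Riemannian \emph{block} $B_{i,s}$ of dimension $n+1$ with $\Ric\geq-(n-1)$: a region of appropriate topology that agrees near both boundary components with standard round Euclidean annuli, and whose interior geometry, after rescaling and taking $s\to 0$, Gromov--Hausdorff converges to an annulus of the cone $C(M_i,d_i)$. This is the same kind of interpolation that Burdick~\cite{Bu19,Bu20} uses in his connected-sum constructions: the round-sphere boundary of the core region together with the strict positivity of its second fundamental form allow a Perelman-style doubly warped product to interpolate smoothly between the core-metric bubble and a round sphere while keeping the Ricci curvature controlled. Having built these blocks, I glue them at a geometric sequence of scales $r_k=\lambda^k$ around a basepoint $x_s$ in an ambient Euclidean background, cycling periodically through the $\ell$ types so that the $k$-th annulus receives a block of type $i\equiv k\pmod{\ell}$, while truncating finitely many scales below the smoothing parameter $s$. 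For each $s>0$ this yields a smooth $(N_s,g_s,x_s)$ with the uniform Ricci and volume bounds.

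By Gromov precompactness a subsequence of $(N_s,g_s,x_s)$ converges in pointed Gromov--Hausdorff sense to a non-collapsed Ricci limit $(Y,d_Y,y)$, which inherits an infinite nested sequence of type-$i$ blocks at scales $\lambda^{k\ell+i}$ for each $i$. Rescaling by $\lambda^{-(k\ell+i)}$ and sending $k\to\infty$ isolates a single type-$i$ block---the sub-scale features collapse to the cone tip, the super-scale features retreat to infinity---so the rescaling limit is exactly $C(M_i,d_i)$. The principal obstacle is the block construction: one must simultaneously secure $\Ric\geq-(n-1)$, the round-Euclidean boundary matching, and the correct $s\to 0$ asymptotic geometry, via doubly warped functions that absorb the mismatch between the smooth Euclidean cone outside the block and the singular $(M_i,d_i)$-cone inside. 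The core-metric hypothesis---round sphere plus strictly positive second fundamental form---is precisely the input needed to make this Ricci-preserving interpolation feasible.
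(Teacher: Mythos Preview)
Your proposal has a genuine gap in the rescaling step, and it causes the whole scheme to miss the claimed tangent cones.

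You place type-$i$ blocks at scales $r_k=\lambda^k$ with $k\equiv i\pmod\ell$, separated by round Euclidean collars, and then assert that rescaling by $\lambda^{-(k\ell+i)}$ and letting $k\to\infty$ ``isolates a single type-$i$ block'' because ``sub-scale features collapse to the cone tip'' and ``super-scale features retreat to infinity''. For a geometric sequence this is simply false. After rescaling by $\lambda^{-(k\ell+i)}$, the block originally at scale $\lambda^{k\ell+i+j}$ sits at scale $\lambda^{j}$, independently of $k$; so as $k\to\infty$ the entire periodic pattern of blocks remains in place, and nothing collapses or retreats. The rescaling limit (if it exists) carries blocks of all $\ell$ types at fixed relative radii, is only self-similar under scaling by $\lambda^{\ell}$, and is not a metric cone over anything. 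If instead you spaced the blocks super-geometrically, say $r_k=\lambda^{k^2}$, then neighbouring blocks would indeed disappear in the limit, but what survives is a single type-$i$ block embedded in flat $\R^{n+1}$, with round $S^n$ cross-sections at all radii outside the block; this is again not $C(M_i,d_i)$. In short, inserting topologically distinct bubbles at discrete scales cannot produce a tangent \emph{cone} whose cross-section is $M_i$ at every radius.

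The paper's argument is organised quite differently and avoids this issue by never changing the topological cross-section. One works on the fixed manifold
\[
X=M_1\#\cdots\#M_\ell\#(-M_1)\#\cdots\#(-M_\ell)
\]
and constructs a smooth family $\{g_{\bar s}\}_{\bar s\in(0,1]^{2\ell}}$ of metrics on $X$, each with $\Ric\ge n-2$ and equal volume, such that sending the $j$-th coordinate of $\bar s$ to $0$ collapses the summand $\pm M_j$ to a singular disc (Lemma~\ref{L:core_deform}). The substantive new step, which your proposal never addresses, is to show that some metric in this family is \emph{Ricci closable} in the sense of Definition~\ref{D:Ricci_closably}; this is done via a $\Z/2$-symmetry criterion (Proposition~\ref{P:Ricci_cl_crit}) applied to the reflection exchanging each $M_j$ with $(-M_j)$ (Proposition~\ref{P:Ricci_clos_conn_sum}). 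With those hypotheses verified one invokes Colding--Naber's Theorem~\ref{T:tang_cone_crit} as a black box: that theorem internally produces a warped metric whose cross-section metric varies so slowly with $r$ that every tangent cone is an honest cone $C(X,g_{\bar s_0})$ with $\bar s_0$ in the closure of the parameter space. The metrics $d_i$ on $M_i$ then arise as boundary points of this closure where all coordinates of $\bar s$ except the $i$-th have gone to $0$, which is why each $d_i$ has $2\ell-1$ singular points rather than the single cap-point in your construction.
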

	Each of the metrics $d_i$ is in fact a Riemannian metric with $(2\ell-1)$ singular points which isometrically contains the core metric on $M_i$ with the embedded disc removed. The limit space $Y$ is homeomorphic to the cone over the connected sum
	\[M=M_1\#\dots\# M_\ell\#(-M_1)\#\dots\#(-M_\ell)\]
	equipped with a Riemannian metric with a singularity at the origin. The construction of the metric is based on a technique developed by Colding--Naber \cite{CN13} to construct metrics with non-unique tangent cones on a (topological) cone. One of the main requirements of this technique is the construction of a \emph{Ricci closable} metric (see Definition \ref{D:Ricci_closably} below) on the cross section. To prove Theorem \ref{T:core_tangent_cone}, we will give a general criterion for the existence of a Ricci closable metric on a manifold with an isometric $\Z/2$-action and apply it to the $\Z/2$-action on $M$ that interchanges each $M_i$ with $(-M_i)$.
	
	Theorem \ref{T:core_tangent_cone} shows in particular that for any closed manifold $M$ that admits a core metric there exists a (non-smooth) metric $d$ such that the cone $C(M,d)$ is a non-collapsed Ricci limit space, since every tangent cone of the limit space is the limit of an appropriate rescaling of the original sequence. In general, it is open on which closed manifold $M$ there exists a metric $d$ such that the cone $C(M,d)$ is a non-collapsed Ricci limit space. For example, it was shown in \cite{Zh24} that this is the case for all 3-dimensional spherical space forms. Moreover, we give further examples in Section \ref{S:Ricci_clos_ex} below (which, in contrast to the manifolds in Theorem~\ref{T:core_tangent_cone}, may also be non-simply-connected). On the other hand, by \cite{ST21} or \cite{BPS24} there is no such metric on $\R P^2$. Further obstructions are given in \cite{Ho17} if one additionally assumes that the converging sequence is orientable. This is in contrast to the collapsed case, where it follows from the work of Sha--Yang \cite{SY91} that for every closed Riemannian $n$-manifold $(M,g)$ of Ricci curvature at least $(n-1)$ the cone $C(M,g)$ is a collapsed Ricci limit space, see Theorem \ref{T:tangent_cone_collapsed} below. By using Theorem \ref{T:core_tangent_cone}, we obtain a similar statement in the non-collapsed case when restricting to simply-connected 4-manifolds.
	
	\begin{corollaryAlph}\label{C:dim-4}
		Let $M^4$ be a closed, smooth, simply-connected $4$-manifold that admits a Riemannian metric of positive scalar curvature. Then there exists a (non-smooth) metric $d$ on $M$ such that the cone $C(M,d)$ is a non-collapsed Ricci limit space. Moreover, if $M$ is the boundary of a compact, oriented, smooth $5$-manifold, then, after possibly changing the smooth structure on $M$, we can assume that $d$ is a smooth Riemannian metric.
	\end{corollaryAlph}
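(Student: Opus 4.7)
The plan is to reduce Corollary \ref{C:dim-4} to Theorem \ref{T:core_tangent_cone} via a topological classification of closed simply-connected 4-manifolds admitting positive scalar curvature: identify $M$ up to homeomorphism with a standard connected sum that admits a core metric, apply (the consequence stated after) Theorem \ref{T:core_tangent_cone}, and transfer the resulting metric back to $M$.

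First I would combine Donaldson's diagonalization theorem, the classification of even indefinite unimodular forms, and Freedman's homeomorphism classification of closed simply-connected 4-manifolds (noting that the Kirby--Siebenmann invariant vanishes since $M$ is smooth) to conclude that $M$ is homeomorphic to a standard connected sum
\[
M_0 \;=\; \#^a\, \C P^2 \;\#\; \#^b\,(-\C P^2)\;\#\; \#^c\,(S^2\times S^2)
\]
for some non-negative integers $a,b,c$. By items (1) and (2) in the list of the introduction, both $\C P^2$ and $S^2\times S^2$ (viewed as a trivial $S^2$-bundle over $S^2$) admit core metrics; by item (4), the same holds for $M_0$. Theorem \ref{T:core_tangent_cone}, applied with $\ell=1$ and $M_1 = M_0$, together with the fact that every tangent cone of a non-collapsed Ricci limit is itself a non-collapsed Ricci limit (a diagonal rescaling argument, noted in the paragraph after the theorem), now yields a (non-smooth) metric $d_0$ on $M_0$ such that $C(M_0, d_0)$ is a non-collapsed Ricci limit space. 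Pulling $d_0$ back under any homeomorphism $f\colon M \to M_0$ produces the desired metric $d = f^* d_0$ on $M$, settling the first assertion.

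For the smooth case, the bounding hypothesis is equivalent to $\sigma(M) = 0$ (via $\Omega_4^{SO}\cong\Z$), forcing $a=b$ in the decomposition above, so that $M_0$ admits the symmetric form
\[
M_0 \;=\; \#_{j=1}^{a+c} N_j \# (-N_j), \qquad N_j \in \{\C P^2,\, S^2\times S^2\},
\]
using $-(S^2\times S^2)\cong S^2\times S^2$. The step of \emph{changing the smooth structure} consists in transferring the standard smooth structure of $M_0$ back to $M$ via $f$, so that $f$ becomes a diffeomorphism. The main obstacle, and the heart of this part of the argument, is then to verify that in this symmetric setting the construction behind Theorem \ref{T:core_tangent_cone} can be refined so that its one remaining singular point on $M_0$ becomes smooth---a refinement I expect to follow from the additional $\Z/2$-symmetry interchanging each $N_j$ with $(-N_j)$, which is already central to the proof of Theorem \ref{T:core_tangent_cone}.
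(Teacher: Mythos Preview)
Your first paragraph is essentially the paper's argument, with one omission: in the spin case the intersection form is $aE_8\oplus bH$, and you need the positive scalar curvature hypothesis (via Lichnerowicz, $\hat{A}(M)=0$, hence $\sigma(M)=0$) to force $a=0$. Donaldson's theorem alone does not do this in the indefinite even case.

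The second part has a genuine gap. First, your symmetric decomposition $M_0=\#_{j=1}^{a+c}N_j\#(-N_j)$ is not correct as stated: in the spin case $a=b=0$ and $M_0=\#_c(S^2\times S^2)$, but your formula would produce $\#_{2c}(S^2\times S^2)$; and when $c$ is odd there is no way to pair the summands at all, since $S^2\times S^2$ is spin and cannot be traded for $\C P^2\#(-\C P^2)$. Second, and more seriously, you do not actually produce the smooth metric: you only ``expect'' that the singular point in the construction of Theorem~\ref{T:core_tangent_cone} can be smoothed using the extra $\Z/2$-symmetry, without indicating how. The paper proceeds differently and more directly. Once $\sigma(M)=0$ forces $M_0\in\{\#_\ell(S^2\times S^2),\ \#_\ell\C P^2\#_\ell(-\C P^2)\}$, one shows that each such $M_0$ carries a \emph{Ricci closable} metric $g$; applying Theorem~\ref{T:tang_cone_crit} to the constant family $g_s\equiv g$ then yields $C(M_0,g)$ as a non-collapsed Ricci limit with $g$ smooth. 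For $\#_\ell\C P^2\#_\ell(-\C P^2)$ this is Proposition~\ref{P:Ricci_clos_conn_sum}. For $\#_\ell(S^2\times S^2)$ with $\ell$ odd---precisely the case where your pairing breaks down---one needs a separate construction (Proposition~\ref{P:SpSq}): realize $\#_\ell(S^2\times S^2)$ via the Sha--Yang surgery description on $S^3\times S^1$, place the $\ell+1$ surgery balls symmetrically with respect to the equatorial reflection of $S^3$, and then apply Proposition~\ref{P:Ricci_cl_crit} to the resulting $\Z/2$-invariant metric.
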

	

	This article is organised as follows. In Section \ref{S:prel} we recall the construction of \cite{CN13} to construct non-collapsed Ricci limit spaces with prescribed tangent cones, as well as basic results on Ricci curvature. Further, in Section \ref{S:Ricci_clos_crit} we establish a criterion for a given Riemannian metric to be Ricci closable and apply this to prove Theorem \ref{T:core_tangent_cone}. In Section \ref{S:Ricci_clos_ex} we give further examples of Ricci closable metrics that are based on known construction methods for Riemannian metrics of positive Ricci curvature. Finally, in Appendix \ref{S:collapsed} we consider the collapsed case and recall the construction of \cite{SY91}.
	
	\begin{ack}
		The author would like to thank Christian Ketterer and Daniele Semola for helpful comments and suggestions on an earlier version of this article.
	\end{ack}

	\section{Preliminaries}\label{S:prel}
	
	
	Following \cite{CN13}, for a non-collapsed Ricci limit space $(Y,d_Y,y)$ we define $\overline{\Omega}_{Y,y}$ as the family of metric spaces $\{(X_s,d_s)\}$ such that $C(X_s,d_s)$ is a tangent cone of $Y$ at $y$.
	
	\begin{definition}[\cite{CN13}]\label{D:Ricci_closably}
		A Riemannian manifold $(M^n,g)$ is \emph{Ricci closable} if for every $\varepsilon>0$ there exists a pointed open Riemannian manifold $(N^{n+1}_\varepsilon,h_\varepsilon,y_\varepsilon)$ of non-negative Ricci curvature such that the annulus $A_{1,\infty}(y_\varepsilon)\subseteq N_\varepsilon$ is isometric to the annulus $A_{1,\infty}(o)\subseteq C(M,(1-\varepsilon)g)$.
	\end{definition}
	Note that a Ricci closable Riemannian manifold in particular has positive Ricci curvature.
	
	The main result of \cite{CN13} is now given as follows.
	
	\begin{theorem}[{\cite[Theorem 1.1]{CN13}}]\label{T:tang_cone_crit}
		Let $\Omega$ be a connected manifold and $X^{n-1}$ a closed manifold with $n\geq 3$. Let $\{g_s\}_{s\in\Omega}$ be a smooth family of Riemannian metrics on $X$ such that
		\begin{enumerate}
			\item $\mathrm{Vol}(X,g_s)=V\leq \mathrm{Vol}(S^{n-1},ds_{n-1}^2)$,
			\item $\Ric_{g_s}\geq n-2$,
			\item There exists $s_0\in\Omega$ such that $g_{s_0}$ is Ricci closable.
		\end{enumerate}
		Then there exists a non-collapsing sequence of pointed complete Riemannian manifolds $(M^n_\alpha,g_\alpha,p_\alpha)$ of $\Ric\geq -(n-1)$ that Gromov--Hausdorff converges to a pointed metric space $(Y,d_Y,y)$ with $\overline{\Omega}_{Y,y}=\overline{\{(X,g_s)\}}$.
	\end{theorem}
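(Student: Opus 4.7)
The plan is to build an open Riemannian manifold with $\Ric \geq -(n-1)$ that contains a base point $y$ whose family of tangent cones is exactly $\overline{\{(X,g_s)\}}$, and then realize it as a non-collapsing pointed Gromov--Hausdorff limit of closed smooth manifolds. The central construction is a warped-product metric on $(0,\infty)\times X$ of the form
\[ h = dr^2 + f(r)^2\, g_{\sigma(r)}, \]
where $f(r)\approx r$ and $\sigma \colon (0,\infty)\to\Omega$ is an oscillating path, so that rescalings at different sequences of radii detect different cross-sectional metrics. A direct Ricci calculation for such a doubly-twisted warped metric produces a horizontal correction of size $|\sigma'(r)|_{g_\sigma}^2$ together with mixed/radial corrections involving $\sigma''$ and $f''/f$; the assumptions $\Ric_{g_s}\geq n-2$ and $\mathrm{Vol}(X,g_s)\leq \mathrm{Vol}(S^{n-1})$ are precisely what is needed for the unwarped cone directions to respect the Bishop--Gromov inequality and to leave enough slack in the lower Ricci bound to absorb the $\sigma$-corrections.

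To realise every $(X,g_s)$ as a tangent cone at $y$, I would prescribe $\sigma$ so that on a sequence of geometric annuli $A_{r_k, C r_k}$ with $r_k\to 0$ it is essentially constant and equal to some $s_k\in\Omega$, with $\{s_k\}$ chosen dense in $\Omega$ and the transitions between consecutive annuli made sufficiently slow in $\log r$. Rescaling by $r_k^{-1}$ then converges in pointed Gromov--Hausdorff sense to $C(X,g_{s_k})$, and a diagonal extraction together with continuity in $s$ delivers every element of $\overline{\{(X,g_s)\}}$ as a tangent cone, and nothing else. The non-collapsed property at $y$ is guaranteed by the volume bound $V\leq \mathrm{Vol}(S^{n-1})$ together with Bishop--Gromov.

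The inner end is closed using the Ricci closability hypothesis at $s_0$. Arranging $\sigma(r)\equiv s_0$ on an interval $(0,r_0]$ makes $h$ agree on a small annulus with a straight cone over $(1-\varepsilon)g_{s_0}$; one then glues in the pointed Ricci-non-negative manifold $(N_\varepsilon, h_\varepsilon, y_\varepsilon)$ supplied by Definition \ref{D:Ricci_closably}, producing a complete open Riemannian manifold with $\Ric\geq -(n-1)$ and the desired tangent cone structure at $y$. To realise this as a GH limit of closed manifolds, I would cap off at radius $R\gg 1$ along a hypersurface of positive mean curvature, using a standard Ricci-preserving outer cap in the style of Perelman, and then let $R\to\infty$. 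Compactness of $\overline{\Omega}_{Y,y}$ under Gromov--Hausdorff convergence ensures that no additional tangent cones are introduced by this process.

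The main obstacle is the simultaneous calibration of the three competing demands on $\sigma$: it must (i) be slow enough that $|\sigma'|^2$ fits inside the Ricci slack provided by (1) and (2), (ii) remain essentially constant on each annulus $A_{r_k, C r_k}$ of definite conformal width so that a genuine cone is seen in the rescaled limit, and (iii) visit a dense subset of $\Omega$ as $r\to 0$. This is most naturally handled under the logarithmic reparametrisation $t=-\log r$, in which the lower Ricci bound translates into a pointwise bound on $|\dot\sigma|_{g_\sigma}$ in $t$, and an explicit piecewise construction of $\sigma$ with geometrically decaying speeds and geometrically lengthening plateaus. Once these quantitative estimates are in place, the Ricci calculations for $h$, the extraction of tangent cones, and the Gromov--Hausdorff approximation by closed manifolds are essentially routine.
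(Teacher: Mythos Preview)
The paper does not prove this statement; it is quoted verbatim from \cite[Theorem~1.1]{CN13} as a black box. Your sketch does capture the broad architecture of the Colding--Naber construction---an oscillating warped product $dr^2+f(r)^2 g_{\sigma(r)}$ with plateaus of $\sigma$ arranged in $\log r$---but the way you deploy Ricci closability is a genuine error, not just a detail to be filled in.

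You propose to set $\sigma\equiv s_0$ on $(0,r_0]$, glue in $N_\varepsilon$, and obtain ``a complete open Riemannian manifold with $\Ric\geq -(n-1)$ and the desired tangent cone structure at $y$''. Neither half of this can hold. If $\sigma$ is constant on an entire neighbourhood of the tip, the only tangent cone there is $C(X,g_{s_0})$, contradicting your own requirement (iii) that $\sigma$ visit a dense subset of $\Omega$ as $r\to 0$. And once $N_\varepsilon$ has been glued in, the point $y=y_\varepsilon$ is a smooth interior point of an honest Riemannian manifold, so its unique tangent cone is $\R^n$; in either reading the family $\{(X,g_s)\}$ is lost. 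In the actual Colding--Naber argument the limit space $Y$ is the metric completion of $((0,\infty)\times X,\,dr^2+f^2 g_{\sigma})$ with $\sigma$ oscillating all the way to $r=0$; the tip $y$ is genuinely singular. Ricci closability is used not to build $Y$ but to build the smooth \emph{approximating} sequence: one arranges $\sigma$ to return to a plateau at $s_0$ on annuli at scales $r_\alpha\to 0$, excises $\{r<r_\alpha\}$, and caps with a rescaled copy of $N_{\varepsilon_\alpha}$ to produce $(M_\alpha,g_\alpha,p_\alpha)$; letting $r_\alpha\to 0$ yields the non-collapsing sequence converging to $(Y,d_Y,y)$. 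Your final step of capping the outer end at $R\gg 1$ to produce closed manifolds is neither required (the statement asks only for complete $M_\alpha$) nor straightforward.
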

	
	We will also need the following gluing result of Perelman \cite{Pe97}.
	
	\begin{theorem}[{\cite[Section 4]{Pe97}, see also \cite[Section 2]{BWW19}}]\label{T:gluing}
		Let $(M_1,g_1)$, $(M_2,g_2)$ be Riemannian manifolds of $\Ric>0$ with compact boundaries such that there exists an isometry $\phi\colon \partial M_1\to \partial M_2$. Assume that the sum of second fundamental forms $\II_{\partial M_1}+\phi^*\II_{\partial M_2}$ is non-negative. Then the $C^0$-metric $g_1\cup_\phi g_2$ on $M_1\cup_\phi M_2$ can be smoothed in an arbitrarily small neighbourhood of the gluing area into a smooth metric of $\Ric>0$.
	\end{theorem}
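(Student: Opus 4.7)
The plan is to smooth the glued $C^0$-metric in a thin tubular neighbourhood of the gluing hypersurface $\Sigma = \partial M_1 \cong \phi(\partial M_1)$ using Fermi coordinates, turning the hypothesis on second fundamental forms into a concavity property that protects the Ricci curvature. First I would fix a small $\delta > 0$ and introduce Fermi coordinates $(t,x)\in(-\delta,\delta)\times\Sigma$, with $t<0$ in $M_1$ and $t>0$ in $M_2$. In these coordinates the glued metric takes the warped form $g = dt^2 + h(t)$, where $h(t)$ is a family of Riemannian metrics on $\Sigma$. The isometry $\phi$ guarantees that $h$ is continuous at $t=0$, while the hypothesis $\II_{\partial M_1} + \phi^*\II_{\partial M_2}\geq 0$ translates, with standard sign conventions, into $h'(0^-) - h'(0^+)\geq 0$ as bilinear forms, i.e., the family $t\mapsto h(t)$ has a concave corner at $t=0$.

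Next I would replace $h$ on a small sub-interval $(-\eta,\eta)$ by a smoothing $\tilde h(t)$, for instance by mollifying in the $t$-variable at scale $\eta$, or by a symmetric convex combination of $h(t)$ and $h(-t)$ through a cutoff function. Either way, the smoothed family agrees with $h$ outside $(-\eta,\eta)$, so the Ricci curvature there is unchanged and remains positive. Inside $(-\eta,\eta)$, the downward jump in $h'$ becomes a strongly negative smoothed second derivative: the trace $\mathrm{tr}_{\tilde h}\tilde h''$ is negative and of order $1/\eta$.

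Then I would compute the Ricci curvature of $dt^2+\tilde h(t)$ in the smoothing region using the standard Fermi-coordinate formulas. Both $\Ric(\partial_t,\partial_t)$ and the tangential components $\Ric(X,X)$ for $X$ tangent to $\Sigma$ contain a leading term of the form $-\tfrac{1}{2}\mathrm{tr}_{\tilde h}\tilde h''$ or $-\tfrac{1}{2}\tilde h''(X,X)$, with the remaining terms bounded uniformly as $\eta\to 0$. By the concavity established above, these leading terms are strictly positive and of order $1/\eta$ throughout $(-\eta,\eta)$, so for $\eta$ small enough they dominate the lower-order corrections, and the smoothed metric has $\Ric>0$ everywhere.

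The main obstacle is handling the mixed components $\Ric(\partial_t,X)$, which involve tangential derivatives of $\tilde h'$ and do not automatically benefit from the vertical concavity. Perelman's construction addresses this by choosing the smoothing to be geometrically symmetric with respect to $\Sigma$, so that these mixed contributions either vanish or remain $O(1)$ in $\eta$ and are absorbed by the dominant positive terms; the alternative smoothing in \cite{BWW19} introduces a warping correction in the normal direction to achieve the same effect. Making these estimates precise, together with the uniformity provided by the compactness of $\partial M_1$, is the technical heart of the argument.
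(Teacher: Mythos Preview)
The paper does not prove Theorem~\ref{T:gluing}; it is stated as a preliminary result and attributed to \cite[Section~4]{Pe97} and \cite[Section~2]{BWW19}, so there is no in-paper proof to compare your proposal against.

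That said, your sketch follows the standard route taken in those references, and the overall architecture (Fermi coordinates, concave corner in $t\mapsto h(t)$, mollification in $t$, curvature formulas from Lemma~\ref{L:curv_form}) is correct. One point deserves more care: you assert that the leading terms $-\tfrac12\tilde h''(X,X)$ are ``strictly positive and of order $1/\eta$'', but the hypothesis is only $\II_{\partial M_1}+\phi^*\II_{\partial M_2}\geq 0$, so in directions $X$ where the jump vanishes this term is $O(1)$, not $O(1/\eta)$, and the dominance argument as stated does not close. The fix, which is what the cited references actually do, is to show that the smoothed Ricci tensor in the collar is, up to $o(1)$ in $\eta$, a convex combination of the Ricci tensors of $g_1$ and $g_2$ plus a \emph{non-negative} contribution coming from the concave jump; positivity then follows from $\Ric_{g_1},\Ric_{g_2}>0$ and compactness of the boundary, without needing the $1/\eta$ term to be strictly positive in every direction. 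Your handling of the mixed terms $\Ric(\partial_t,X)$ is on the right track; in \cite{BWW19} these are shown to be $O(1)$ and are controlled by Cauchy--Schwarz against the positive diagonal blocks.
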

	
	Here we use the convention that the second fundamental form of the boundary of a Riemannian manifold $(M,g)$ is given by
	\[ \II(u,v)=g(\nabla_u\nu,v), \]
	where $u,v\in T\partial M$ and $\nu$ is the outward unit normal field of $\partial M$. We say the boundary is \emph{convex} if $\II$ is positive definite.
	
	Finally, we recall the following formulae for the Ricci curvatures of a metric on a cylinder, see e.g.\ \cite[Lemma 2.1]{Re24a}
	\begin{lemma}
		\label{L:curv_form}
		Let $M$ be a manifold, $I$ an interval and let $g=dt^2+h_t$ be a Riemannian metric on $I\times M$, where $h_t$ is a smooth family of metrics on $M$. Let $h_t'$ and $h_t''$ denote the first and second derivative of $h_t$ in $t$-direction, respectively. Then the second fundamental form of a slice $\{t\}\times M$ with respect to the normal vector $\partial_t$ is given by
		\[\II=\frac{1}{2}h_t', \]
		and the Ricci curvature of the metric $g$ are given as follows:
		\begin{align*}
			&\Ric(\partial_t,\partial_t)=-\frac{1}{2}\mathrm{tr}_{h_t}h_t^{\prime\prime}+\frac{1}{4}\lVert h_t^\prime\rVert_{h_t}^2,\\
			&\Ric(v,\partial_t)\;=-\frac{1}{2}v(\mathrm{tr}_{h_t}h_t')+\frac{1}{2}\sum_{i}(\nabla^{h_t}_{e_i}h_t')(v,e_i),\\
			&\Ric(u,v)\;\;=\Ric^{h_t}(u,v)-\frac{1}{2}h_t^{\prime\prime}(u,v)+\frac{1}{2}\sum_i h_t^\prime(u,e_i)h_t^\prime(v,e_i)-\frac{1}{4}h_t^\prime(u,v)\mathrm{tr}_{h_t}h_t^\prime.
		\end{align*}
		Here $u,v\in T_xM$ and $\{e_i\}$ is an orthonormal basis of $T_xM$ with respect to $h_t$.
	\end{lemma}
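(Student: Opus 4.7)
The plan is to derive the three identities from the Gauss, Codazzi, and Riccati (radial curvature) equations applied to the hypersurfaces $\Sigma_t := \{t\}\times M \subset I\times M$ with $\partial_t$ as unit normal.

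First I would compute the second fundamental form. Since $g(\partial_t,\partial_t)\equiv 1$ and $[\partial_t,X]=0$ for every $X$ lifted trivially from $M$, the Koszul formula yields
\[ 2\,g(\nabla_u\partial_t,v) \;=\; \partial_t\bigl(h_t(u,v)\bigr) \;=\; h_t'(u,v), \]
so $\II = \tfrac{1}{2}h_t'$ and the shape operator is $S = \tfrac{1}{2}h_t^{-1}h_t'$. This proves the first assertion and sets up all three Ricci identities, since each subsequent curvature formula can be expressed through $S$ and its $\partial_t$-derivative.

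I would then evaluate each Ricci component separately. For $\Ric(\partial_t,\partial_t)$, I trace the Riccati equation $\nabla_{\partial_t}S + S^2 + R(\cdot,\partial_t)\partial_t = 0$ to obtain $\Ric(\partial_t,\partial_t) = -\partial_t(\mathrm{tr}\,S) - \mathrm{tr}(S^2)$. Using $\mathrm{tr}\,S = \tfrac{1}{2}\mathrm{tr}_{h_t}h_t'$ together with $\partial_t h_t^{ij} = -h_t^{ik}h_t^{jl}(h_t')_{kl}$ gives $\partial_t(\mathrm{tr}_{h_t}h_t') = \mathrm{tr}_{h_t}h_t'' - \lVert h_t'\rVert_{h_t}^2$, and $\mathrm{tr}(S^2) = \tfrac{1}{4}\lVert h_t'\rVert_{h_t}^2$; combining these reproduces the stated expression. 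For $\Ric(v,\partial_t)$ I use the Codazzi equation, which for the $\partial_t$-normal slice reads $R^g(e_i,v,e_i,\partial_t) = (\nabla^{h_t}_{e_i}\II)(v,e_i) - (\nabla^{h_t}_v\II)(e_i,e_i)$; tracing over $i$ and substituting $\II = \tfrac{1}{2}h_t'$, together with $\sum_i(\nabla^{h_t}_v\II)(e_i,e_i) = \tfrac{1}{2}v(\mathrm{tr}_{h_t}h_t')$, yields the mixed formula. For the tangential Ricci $\Ric(u,v)$ I combine the Gauss equation on $\Sigma_t$ with the Riccati identity for $R^g(u,\partial_t,v,\partial_t)$ in the $\partial_t$-plane, sum over an $h_t$-orthonormal frame $\{e_i\}$, and simplify: the intrinsic part contributes $\Ric^{h_t}(u,v)$, the Gauss cross-terms give the $h_t'(u,e_i)h_t'(v,e_i)$ sum together with the $\tfrac{1}{4}h_t'(u,v)\mathrm{tr}_{h_t}h_t'$ correction, and the radial term provides $-\tfrac{1}{2}h_t''(u,v)$.

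Nothing here is deep; these are standard hypersurface formulas and variants are recorded in, e.g., \cite{Re24a}. The main obstacle is purely bookkeeping: correctly converting between $\II$, the shape operator $S$, and $h_t'$, and keeping track of the two distinct trace terms $\mathrm{tr}_{h_t}h_t''$ and $\lVert h_t'\rVert_{h_t}^2$ that appear in the radial and tangential identities with different coefficients. Fixing an $h_t$-orthonormal frame $\{e_i\}$ at the point of evaluation from the outset makes every trace transparent and reduces each identity to a routine algebraic manipulation.
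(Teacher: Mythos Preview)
Your derivation is correct and follows the standard route: compute $\II=\tfrac{1}{2}h_t'$ via Koszul, then read off the three Ricci components from the Riccati, Codazzi, and Gauss equations for the foliation by $\{t\}\times M$. The bookkeeping you flag (the identity $\partial_t(\mathrm{tr}_{h_t}h_t')=\mathrm{tr}_{h_t}h_t''-\lVert h_t'\rVert_{h_t}^2$ and $\mathrm{tr}(S^2)=\tfrac{1}{4}\lVert h_t'\rVert_{h_t}^2$) is exactly what is needed, and the coefficients you obtain match the statement.

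There is nothing to compare against, however: the paper does not give its own proof of this lemma. It is simply recorded as a known formula with the pointer ``see e.g.\ \cite[Lemma 2.1]{Re24a}'' and then used as a black box. Your write-up therefore supplies more than the paper does; if anything, it could serve as the omitted proof.
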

	In the special case of a doubly warped product metric, we obtain the following.
	\begin{lemma}\label{L:doubly_warped_Ric}
		Let $(M_1^{n_1},g_1)$ and $(M_2^{n_2},g_2)$ be Riemannian manifolds and let $f_1,f_2\colon[0,t_0]\to(0,\infty)$ be smooth functions for some $t_0>0$. Then the Ricci curvatures of the metric
		\[ dt^2+f_1(t)^2g_1+f_2(t)^2g_2 \]
		on $[0,t_0]\times M_1\times M_2$ are given by
		\begin{align*}
			\Ric(\partial_t,\partial_t)&=-n_1\frac{f_1''}{f_1}-n_2\frac{f_2''}{f_2},\\
			\Ric(\tfrac{v_1}{f_1},\tfrac{v_1}{f_1})&=-\frac{f_1''}{f_1}+\frac{\Ric^{g_1}(v_1,v_1)-(n_1-1){f_1'}^2}{f_1^2}-n_2\frac{f_1'f_2'}{f_1f_2},\\
			\Ric(\tfrac{v_2}{f_2},\tfrac{v_2}{f_2})&=-\frac{f_2''}{f_2}+\frac{\Ric^{g_2}(v_2,v_2)-(n_2-1){f_2'}^2}{f_2^2}-n_1\frac{f_1'f_2'}{f_1f_2}\\
			\Ric(\partial_t,v_1)&=\Ric(\partial_t,v_2)=\Ric(v_1,v_2)=0,
		\end{align*}
		where $v_i\in TM_i$ are unit vectors with respect to $g_i$. Further, the second fundamental form of a slice $\{t\}\times M_1\times M_2$ with respect to the unit normal $\partial_t$ is given by
		\[\II(\tfrac{v_i}{f_i},\tfrac{v_j}{f_j})=\frac{f_i'}{f_i}\delta_{ij}. \]
	\end{lemma}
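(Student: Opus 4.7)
The statement is a direct specialization of Lemma \ref{L:curv_form} to the doubly warped product metric, so the plan is essentially a bookkeeping computation.

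First I would set $h_t = f_1(t)^2 g_1 + f_2(t)^2 g_2$, viewed as a $t$-dependent metric on $M_1 \times M_2$, so that the ambient metric is $g = dt^2 + h_t$. Differentiating gives
\[ h_t' = 2 f_1 f_1'\, g_1 + 2 f_2 f_2'\, g_2, \qquad h_t'' = 2(f_1'^2 + f_1 f_1'')\, g_1 + 2(f_2'^2 + f_2 f_2'')\, g_2, \]
and both are block-diagonal with respect to the product decomposition $T(M_1\times M_2) = TM_1 \oplus TM_2$. I would then choose a local $h_t$-orthonormal frame consisting of vectors of the form $u_i/f_1$ with $\{u_i\}$ a $g_1$-orthonormal frame on $M_1$, together with vectors $w_j/f_2$ with $\{w_j\}$ a $g_2$-orthonormal frame on $M_2$. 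On such a frame $h_t'$ and $h_t''$ act diagonally, giving
\[ \mathrm{tr}_{h_t} h_t' = 2\bigl(n_1 \tfrac{f_1'}{f_1} + n_2 \tfrac{f_2'}{f_2}\bigr), \qquad \lVert h_t'\rVert_{h_t}^2 = 4\bigl(n_1 \tfrac{f_1'^2}{f_1^2} + n_2 \tfrac{f_2'^2}{f_2^2}\bigr), \]
and analogously for $\mathrm{tr}_{h_t} h_t''$. Plugging into the first formula of Lemma \ref{L:curv_form} immediately yields the asserted expression for $\Ric(\partial_t,\partial_t)$, while $\II = \tfrac{1}{2} h_t'$ gives the stated second fundamental form.

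For the tangential directions $v_i/f_i$ with $v_i$ a unit vector in $g_i$, I would evaluate each term of the third formula in Lemma \ref{L:curv_form}. Since $(M_1 \times M_2, h_t)$ is a Riemannian product of the rescaled factors $(M_i, f_i^2 g_i)$, the intrinsic Ricci tensor satisfies $\Ric^{h_t}(v_1/f_1, v_1/f_1) = \Ric^{g_1}(v_1,v_1)/f_1^2$ (using the scaling invariance of Ricci) and $\Ric^{h_t}(v_1/f_1, v_2/f_2) = 0$. The block-diagonal shape of $h_t'$ kills all cross contributions, and a short computation reduces the remaining terms to the three summands in the claimed formulas. The mixed Ricci $\Ric(v_1,v_2)$ vanishes for the same reason: every quantity $h_t'(v_1,\cdot)h_t'(v_2,\cdot)$ and $h_t''(v_1,v_2)$ evaluated on $TM_1 \times TM_2$ vanishes.

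The only step where a little care is needed is showing $\Ric(\partial_t, v_i) = 0$. The trace $\mathrm{tr}_{h_t} h_t'$ depends only on $t$, so $v_i(\mathrm{tr}_{h_t} h_t') = 0$. For the remaining divergence-type sum $\sum_j (\nabla^{h_t}_{e_j} h_t')(v_i, e_j)$, I would observe that the Levi-Civita connection $\nabla^{h_t}$ is the product of the Levi-Civita connections of the rescaled factors $(M_i, f_i^2 g_i)$, so it preserves each factor, and $\nabla^{f_i^2 g_i} g_i = 0$; hence $\nabla^{h_t} h_t' = 0$, which makes the sum vanish. No real obstacle arises; the only risk is sign or factor mistakes, which one would guard against by cross-checking in the singly warped case $f_2 \equiv 1$ (where the formulas must collapse to the standard warped product Ricci curvatures).
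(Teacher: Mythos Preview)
Your proposal is correct and follows exactly the approach the paper takes: the paper states this lemma as an immediate specialization of Lemma~\ref{L:curv_form} (introduced by ``In the special case of a doubly warped product metric, we obtain the following'') and gives no separate proof, so your explicit term-by-term computation is precisely the omitted bookkeeping.
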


	\section{A criterion for Ricci closability}\label{S:Ricci_clos_crit}
	
	In this section we give a criterion for a Riemannian metric to be Ricci closable (Proposition \ref{P:Ricci_cl_crit}) and use this to prove Theorem \ref{T:core_tangent_cone}. We first slightly reformulate the condition of Ricci closability.
	
	\begin{lemma}\label{L:Ricci_cl_crit}
		Let $(M^n,g)$ be a closed Riemannian manifold of positive Ricci curvature and suppose that there exists a compact Riemannian manifold $(N,g_N)$ with convex boundary isometric to $(M,g)$. Then there exists $c>0$ such that $(M,c^2g)$ is Ricci closable.
	\end{lemma}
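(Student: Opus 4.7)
Proof plan: The plan is to produce the required manifold $N_\varepsilon$ by gluing a rescaled copy of $(N, g_N)$ to an appropriate cone annulus over $(M, g)$, smoothing the result via Theorem \ref{T:gluing}.

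First I would fix the constant $c$. Since $(M, g)$ has positive Ricci curvature there is $K > 0$ with $\Ric_g \geq K g$, and since $\partial N$ is strictly convex its principal curvatures are bounded below by some $\lambda > 0$. I would set $c > 0$ with $c \leq \min\{\lambda, \sqrt{K/(n-1)}\}$. By the curvature formulas of Lemma \ref{L:doubly_warped_Ric} applied to the warped product $dr^2 + (rc)^2 g$, this choice ensures that the cone $C(M, c^2 g)$ (and, a fortiori, $C(M, c^2(1-\varepsilon) g)$ for every $\varepsilon \in (0,1)$) has non-negative Ricci curvature on its smooth locus.

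For each $\varepsilon \in (0, 1)$ I would then assemble $N_\varepsilon$ from two pieces. Set $N'_\varepsilon := (N, c^2(1-\varepsilon) g_N)$, whose boundary is isometric to $(M, c^2(1-\varepsilon) g)$ and whose boundary shape operator has eigenvalues $\geq \lambda/(c\sqrt{1-\varepsilon}) > 1$; and set $A_\varepsilon := ([1, \infty) \times M, \, dr^2 + r^2 c^2(1-\varepsilon) g)$, the cone annulus whose inner boundary at $r = 1$ has matching induced metric and shape operator $-\mathrm{id}$ with respect to the outward normal $-\partial_r$. Gluing these along their boundary via the identity on $M$, the sum of shape operators is $\geq (\lambda/(c\sqrt{1-\varepsilon}) - 1)\,\mathrm{id} > 0$, so Theorem \ref{T:gluing} produces a smooth Riemannian metric $h_\varepsilon$ on $N_\varepsilon := N'_\varepsilon \cup_{\partial} A_\varepsilon$ of positive Ricci curvature in an arbitrarily small neighbourhood of the seam (hence globally $\Ric \geq 0$), and leaves the cone annulus structure of the outer region intact up to distance $\delta$ from the seam, for any prescribed small $\delta > 0$.

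It remains to pin down the pointed structure, i.e.\ to choose $y_\varepsilon \in N'_\varepsilon$ such that $A_{1,\infty}(y_\varepsilon) \subseteq N_\varepsilon$ is isometric, as a Riemannian submanifold, to $A_{1,\infty}(o) \subseteq C(M, c^2(1-\varepsilon) g)$. Since the outer region of $N_\varepsilon$ is already isometric to the cone annulus over $(M, c^2(1-\varepsilon) g)$ from some inner radius, the condition reduces to arranging that the distance function from $y_\varepsilon$ coincides with the cone radial coordinate $r$ on the outer region; equivalently, that the slice $\{r = 1\}$ is a geodesic sphere of radius $1$ about $y_\varepsilon$ in $N_\varepsilon$. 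I expect this distance-matching step, rather than the gluing itself, to be the principal obstacle. To handle it, I would allow a refinement of the construction: insert a short warped-product collar of the form $dt^2 + \phi(t)^2 g$ between $N'_\varepsilon$ and $A_\varepsilon$, with profile $\phi$ prescribed so that the terminal slice is realised as a geodesic sphere about a chosen interior point of the cap; the freedom in choosing $c$ small (subject only to the initial constraints) gives room to carry out this adjustment while preserving the gluing and Ricci conditions. This mirrors the adjustment used in the Colding--Naber construction \cite{CN13} that motivates the notion of Ricci closability.
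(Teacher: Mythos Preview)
Your approach is the paper's: cap the cone over $(M,(1-\varepsilon)c^2 g)$ with a rescaled copy of $(N,g_N)$ and smooth via Theorem~\ref{T:gluing}. There is, however, a concrete gap. The cone annulus $A_\varepsilon=([1,\infty)\times M,\,dr^2+r^2c^2(1-\varepsilon)g)$ has $\Ric(\partial_r,\partial_r)\equiv 0$ (the profile $r\mapsto r$ is linear, so $f''=0$ in Lemma~\ref{L:doubly_warped_Ric}), and therefore fails the hypothesis $\Ric>0$ of Theorem~\ref{T:gluing}; you cannot glue $N'_\varepsilon$ directly to $A_\varepsilon$ with that theorem as stated. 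The paper sidesteps exactly this point by attaching to $N$ not the bare cone but a warped cylinder $([0,\infty)\times M,\,dt^2+f(t)^2g)$ with $f''<0$ on $[0,1)$ and $f'\equiv c$ on $[1,\infty)$: the strict concavity near $t=0$ gives $\Ric(\partial_t,\partial_t)=-n f''/f>0$ there, so Theorem~\ref{T:gluing} applies near the seam, while the eventual linearity of $f$ recovers the exact cone metric $dt^2+(ct+c_0)^2g$ at infinity. The ``short warped-product collar $dt^2+\phi(t)^2g$'' you propose at the end is precisely this device---but you introduce it only to address the basepoint, \emph{after} having already (illegitimately) invoked the gluing theorem.

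As for the basepoint matching you flag as the principal obstacle: the paper dispatches it with the single phrase ``by appropriately rescaling'' and does not regard it as a difficulty; once the metric is an exact cone outside a compact set, rescaling shrinks that set inside any prescribed ball, which is what is actually used in Theorem~\ref{T:tang_cone_crit}. So the substantive step here is the concave-to-linear warping profile, and it belongs \emph{before} the gluing, not as an afterthought.
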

	\begin{proof}
		For $c>0$ we attach a cylinder $([0,\infty)\times M,dt^2+f(t)^2g)$ to $N$, where $f\colon[0,\infty)\to(0,\infty)$ is a smooth function satisfying $f'(0)=2c$, $f''|_{[0,1)}<0$ and $f'|_{[1,\infty)}\equiv c$. Then, for $c$ sufficiently small, the metric $dt^2+f(t)^2g$ has positive Ricci curvature near the boundary $\{0\}\times M$ and the sum of second fundamental forms of $g_N$ and $dt^2+f(t)^2g$ is positive on $M$ by Lemma \ref{L:doubly_warped_Ric}. Hence, we can apply Theorem \ref{T:gluing} to smooth this metric in a small neighbourhood of the gluing area while preserving positive Ricci curvature in this neighbourhood. Outside a neighbourhood of the manifold $N$, this metric is then of the form $dt^2+(ct+c_0)^2g$. Thus, by appropriately rescaling this metric we obtain that the metric $c^2g$ is Ricci closable.
	\end{proof}
	
	Our criterion is now given as follows.
	\begin{proposition}\label{P:Ricci_cl_crit}
		Let $(M^n,g)$ be a closed Riemannian manifold of positive Ricci curvature that admits an isometric action of $\Z/2$ such that the fixed point set $Y\subseteq M$ is a non-empty hypersurface with trivial normal bundle. Then there exists $c>0$ such that $(M,c^2 g)$ is Ricci closable.
	\end{proposition}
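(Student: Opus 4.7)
By Lemma \ref{L:Ricci_cl_crit}, it suffices to construct, for some $c>0$, a compact Riemannian manifold $(N,g_N)$ of positive Ricci curvature with convex boundary isometric to $(M,c^2 g)$. Let $\tau$ denote the $\Z/2$-isometry, and set $M_+:=M/\langle\tau\rangle$. Since $Y$ is a hypersurface with trivial normal bundle, $M_+$ is a smooth compact manifold with totally geodesic boundary $Y$ and positive Ricci curvature, and a tubular neighbourhood of $Y$ in $M$ is isometric to $Y\times(-\varepsilon,\varepsilon)$ with metric $ds^2+h_s$ satisfying $h_{-s}=h_s$.

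I take $N$ topologically to be the mapping cylinder of the quotient $q\colon M\to M_+$, i.e.\ $N:=(M\times[0,1])/\sim$ with $(x,1)\sim(\tau(x),1)$. Away from the fixed locus $Y\times\{1\}$ the identification is free and yields a smooth interior. Near a fixed point $y\in Y$, in tube coordinates $(u,s,t)$ and with $z:=s+i(1-t)$, the assignment $(u,s,t)\mapsto(u,\Re z^2,\Im z^2)$ descends to a smooth chart on the quotient, in which the $z=0$ locus maps to a single smooth interior point. Thus $N$ is a smooth compact manifold with $\partial N=M\times\{0\}\cong M$.

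The metric $g_N$ is assembled from two pieces glued along $M\times\{1/2\}$ using Perelman's gluing theorem (Theorem \ref{T:gluing}). On the \emph{bulk} $N_0:=M\times[0,1/2]$ I use the warped product $\phi(t)^2 g+dt^2$ with $\phi$ smooth, positive and concave, $\phi(0)=c$, and $\phi'(0)>0$: by Lemma \ref{L:doubly_warped_Ric} (taking the second factor trivial) this has positive Ricci curvature for suitable $\phi$, and the boundary at $t=0$ is isometric to $(M,c^2 g)$ with positive second fundamental form. On the \emph{fold region} $N_1:=M\times[1/2,1]$ I work in the $z^2$-chart: a direct computation using $ds^2+dt^2=(dX^2+dY^2)/(4|z|^2)$ shows that a metric of the form $\tilde h_s(u)+F(s,t)(ds^2+dt^2)$ in tube coordinates descends smoothly to the chart precisely when $F(s,t)=|z|^2\Psi(z^2)$ for a smooth positive $\Psi$, and when $\tilde h_s(u)$, in a neighbourhood of $Y\times\{1\}$, is constant in $s$ (since $s^2=(\sqrt{X^2+Y^2}+X)/2$ fails to be smooth at the origin). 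I therefore modify $\tilde h_s$ in a small $\Z/2$-invariant neighbourhood of $Y\times\{1\}\subseteq N_1$ to equal $g_Y$ there, which is possible while keeping the Ricci curvature positive via Lemma \ref{L:curv_form}. Choosing $\Psi$ so that the resulting fold metric matches the bulk metric at $t=1/2$ and the sum of second fundamental forms along this interface is non-negative, Theorem \ref{T:gluing} produces a smooth positive-Ricci metric $g_N$ on $N$. Lemma \ref{L:Ricci_cl_crit} then yields that $(M,c^2 g)$ is Ricci closable.

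The principal obstacle is the metric construction on $N_1$: one must simultaneously arrange smooth descent to the quotient (forcing the conformally flat ansatz and the $s$-constancy of $\tilde h_s$ near the fixed locus), positive Ricci curvature (requiring careful use of Lemma \ref{L:curv_form} under the modification), and compatible matching to the bulk at $t=1/2$. The $\Z/2$-symmetry $h_{-s}=h_s$ is precisely the input that makes this possible, as it supplies the evenness in $s$ that allows the controlled modification of $\tilde h_s$ near the fixed set to descend smoothly under the $z\mapsto z^2$ chart.
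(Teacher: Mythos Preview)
Your topological model of $N$ as the mapping cylinder of $q\colon M\to M_+$ is essentially the same as the paper's $I\times M_1$ with smoothed corners, and your $z\mapsto z^2$ chart is a clean way to see the smooth structure near the fold. The overall plan---fill $M$ by such an $N$ and then invoke Lemma~\ref{L:Ricci_cl_crit}---is exactly the paper's strategy.

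However, your metric construction on $N_1$ has a genuine gap. Near the fixed locus you force $\tilde h_s\equiv g_Y$, so in the $(X,Y)=z^2$ chart your metric reads
\[
g_Y+\tfrac{\Psi(X,Y)}{4}\,(dX^2+dY^2),
\]
which is a Riemannian \emph{product} of $(Y,g_Y)$ with a surface. Its Ricci tensor is block-diagonal, and in the $Y$-directions it equals $\Ric^{g_Y}$. But the hypothesis gives only $\Ric^g>0$ on $M$; the induced metric $g_Y$ on the totally geodesic hypersurface $Y$ need not have positive (or even non-negative) Ricci curvature. So the assertion that you can ``keep the Ricci curvature positive via Lemma~\ref{L:curv_form}'' fails at this step, and no choice of the conformal factor $\Psi$ on the two-dimensional base can repair the $Y$-directions in a genuine product. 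The paper avoids this by \emph{warping} the $Y$-factor: after rescaling so that $\Ric^{g_Y}\ge -(n-2)$, it replaces $g_Y$ by $f(t)^2 h_0$ with $f$ satisfying $-f''/f-(n-2)(1+f'^2)/f^2>0$, which forces positive Ricci in the $Y$-directions despite a possibly negative $\Ric^{g_Y}$; the second warping function $k$ then plays the role of your $F$.

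A secondary issue is the modification ``$\tilde h_s\to g_Y$ near $s=0$''. The original family $h_s$ has $h_0=g_Y$ and $\partial_s h_s|_{s=0}=0$ (total geodesy), but higher $s$-derivatives are uncontrolled, and freezing them while retaining $\Ric>0$ is not a consequence of Lemma~\ref{L:curv_form} alone. The paper handles the analogous step with the $1$-jet deformation result of \cite{Wr02} (see also \cite{BH22}), passing through a one-parameter family of positive-Ricci metrics to a model of the form $dt^2+f(t)^2 h_0$; this is precisely what makes the subsequent warped construction go through and also what allows the boundary metric to be deformed back to $g$ at the end.
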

	\begin{proof}
		Since $Y$ is the fixed point set of an isometric action, it is totally geodesic. Moreover, since the normal bundle of $Y$ is trivial, cutting $M$ along $Y$ results in a manifold with two totally geodesic boundary components isometric to $Y$. By \cite{La70} these two boundary components cannot lie in the same connected component of $\overline{M\setminus Y}$, so $M$ decomposes into $M_1\cup_Y M_2$, where $M_1,M_2\subset M$ are submanifolds with common boundary $Y$, and the action of $\Z/2$ yields an isometry $\phi\colon(M_1,g|_{M_1})\to(M_2,g|_{M_2})$.
		
		We now slightly deform the metric $g_1=g|_{M_1}$ near $\partial M_1=Y$ as follows. First, we rescale the metric $g$ so that the Ricci curvatures of the induced metric on $Y$ are bounded from below by $-(n-2)$. A neighbourhood of the boundary $\partial M_1$ can be identified with $[0,\varepsilon]\times Y$ such that the metric $g_1$ is of the form $dt^2+h_t$ on this part, where $h_t$ is a smoothly varying family of metrics on $Y$. By Lemma \ref{L:curv_form}, since $Y$ is totally geodesic, we have $\partial_t h_t=0$ at $t=0$. 
		
		Now let $f\colon[0,\varepsilon]\to(0,\infty)$ be a smooth function satisfying
		\begin{equation}
			\label{EQ:f_ineq}
			-\frac{f''}{f}-(n-2)\frac{1+{f'}^2}{f^2}>0,
		\end{equation}
		and such that $f(0)=1$ and $f$ is an even function at $t=0$. Such a function can for example be obtained as the solution of the initial value problem
		\[ f''=-f-(n-2)\frac{1+{f'}^2}{f},\quad f(0)=1,\,f'(0)=0, \]
		where we possibly need to choose $\varepsilon$ smaller to ensure that a solution exists, and subsequently smoothing the function $t\mapsto f(|t|)$ on $[-\varepsilon,\varepsilon]$ at $t=0$ to obtain an odd function at $t=0$. Since the original function is already $C^2$ at $t=0$, the smoothing can be done $C^2$-close, so that \eqref{EQ:f_ineq} is still satisfied.
		
		By Lemma \ref{L:doubly_warped_Ric}, the metric $dt^2+f(t)^2h_0$ on $[0,\varepsilon]\times Y$ has positive Ricci curvature. Hence, since the $1$-jets of the metrics $g_1$ and $dt^2+f(t)^2h_0$ coincide on $Y$, we can apply the deformation result of \cite{Wr02}, see also \cite{BH22}, to deform $g_1$ through metrics $g_s$, $s\in[0,1]$, of positive Ricci curvature into a metric $g_0$ that is of the form $dt^2+f(t)^2h_0$ on $[0,\varepsilon']\times Y$ for some $\varepsilon'\in(0,\varepsilon]$. Moreover, since near $Y$ the metric $g_s$ defined in \cite{Wr02} is a convex combination of the metrics $g_0$ and $g_1$, and since both the metric $g_0$ and $g_1$ define a smooth metric on the double $M_1\cup_Y M_1$, the same holds true for all metrics $g_s$.
		
		Next, we consider the space $N=I\times M_1$, where $I=[0,\pi]$. Then
		\[ \partial N\cong M_1\cup_Y (I\times Y)\cup_Y M_1\cong M_1\cup_Y M_1\cong M. \]
		To obtain a smooth metric on $N$, we choose a smooth function $k\colon [0,\varepsilon']\to[0,\infty)$ such that
		\begin{enumerate}
			\item $k$ is an odd function at $t=0$ with $k'(0)=1$ and $k'''(0)<0$,
			\item $k(\varepsilon')>0$ and all derivatives of $k$ vanish at $t=\varepsilon'$,
			\item $k''<0$ on $(0,\varepsilon')$.
		\end{enumerate}
		We then define the metric $g_N$ on $N$ by
		\[ g_N=\begin{cases}
			k(t)^2ds^2+dt^2+f(t)^2h_0,\quad & \text{on }I\times [0,\varepsilon']\times Y,\\
			k(\varepsilon')^2ds^2+g_0,\quad & \text{else}.
		\end{cases} \]
		Here $ds^2$ denotes the standard metric on $I$. Since $k(0)=0$, the metric $g_N$ is in fact a metric on
		\[ (I\times M_1)\cup_{I\times Y}(D^2_+\times Y), \]
		which is diffeomorphic to the space obtained from $N$ by smoothing the corners. Here $D^2_+=(\R\times[0,\infty))\cap D^2$ is a half-disc and we identify the interior face $(\R\times\{0\})\cap D^2$ of its boundary with $I$.
		
		By the boundary conditions of $f$ and $k$ at $t=0$, doubling the metric $g_N$ along its boundary results in a smooth metric on $(S^1\times M_1)\cup_{S^1\times Y}(D^2\times Y)$, see e.g.\ \cite[Proposition 1.4.7]{Pe16}. In particular, the metric $g_N$ itself is smooth and has totally geodesic boundary. Moreover, by Lemma \ref{L:doubly_warped_Ric}, the metric $g_N$ has non-negative Ricci curvature, and strictly positive Ricci curvature on $I\times[0,\varepsilon']\times Y$. Hence, by \cite[Proposition 2.15]{Re24a}, we can deform the metric $g_N$ into a metric (which we again denote by $g_N$) of strictly positive Ricci curvature and convex boundary, while leaving the induced metric on the boundary unchanged.
		
		The induced metric on the boundary is given by the double of the metric $g_0$. By applying the deformation $g_s$, we obtain a deformation through metrics of positive Ricci curvature to the double of the metric $g_1$, i.e.\ to $g$. Hence, by \cite[Theorem C]{Bu20}, we obtain a metric of positive Ricci curvature on $N$ with convex boundary isometric to $g$. In particular, by Lemma \ref{L:Ricci_cl_crit}, there exists $c>0$ so that $(M,c^2g)$ is Ricci closable.	
	\end{proof}
	
	We now apply Proposition \ref{P:Ricci_cl_crit} to the following $2\ell$-fold connected sum.
	
	\begin{proposition}\label{P:Ricci_clos_conn_sum}
		Let $M_1^n,\dots,M_\ell^n$ be closed manifolds that admit core metrics. Then there exists a metric $g$ of positive Ricci curvature on the space
		\[ M_1\#\dots\# M_\ell\#(-M_1)\#\dots\#(-M_\ell) \]
		such that for each summand there exists an isometric embedding of $\pm M_i\setminus {D^n}^\circ$ equipped with the corresponding core metric and such that $c^2g$ is Ricci closable for some $c>0$.
	\end{proposition}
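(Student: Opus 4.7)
The plan is to apply Proposition \ref{P:Ricci_cl_crit} after constructing $g$ as a $\Z/2$-symmetric double. By Burdick's connected-sum construction for core metrics \cite{Bu20a}, there is a core metric $g_X$ on $X := M_1 \# \dots \# M_\ell$ that isometrically contains each $M_i \setminus {D^n}^\circ$ with its given core metric, the remainder being a Ricci-positive hub. Setting $W = X \setminus {D^n}^\circ$ with the induced metric, the boundary $\partial W = S^{n-1}$ is round with strictly positive second fundamental form, and the embedded $M_i$-summands lie at positive distance from $\partial W$ inside the hub.

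I would then symmetrize $W$ by an extension-and-doubling procedure. On a small collar of $\partial W$ inside the hub, Wraith's deformation \cite{Wr02} (exactly as in the proof of Proposition \ref{P:Ricci_cl_crit}) brings $g_X$ into the doubly-warped-product form $dt^2 + f(t)^2 g_{S^{n-1}}$ while preserving positive Ricci curvature and leaving the embedded $M_i$-summands untouched. To this I attach an external neck $[0,a] \times S^{n-1}$ with metric $dt^2 + \tilde{f}(t)^2 g_{S^{n-1}}$, where $\tilde{f}$ matches $f$ smoothly at $t=0$, is even in $t$ about $t=a$ (so in particular $\tilde{f}'(a)=0$), and is sufficiently concave with $|\tilde{f}'|<1$ that Lemma \ref{L:doubly_warped_Ric} yields strict Ricci positivity throughout the neck; such a $\tilde f$ exists by an ODE argument analogous to the one used for $f$ in the proof of Proposition \ref{P:Ricci_cl_crit}. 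Doubling the extended manifold along the totally geodesic slice $\{a\} \times S^{n-1}$ then produces a smooth positive Ricci metric $g$ on a manifold diffeomorphic to $X \# (-X) = M$.

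By construction, each $\pm M_i \setminus {D^n}^\circ$ is isometrically embedded in one of the two copies of $W$ with its own core metric. The involution swapping the two copies acts by isometries of $(M, g)$, with fixed point set the central slice $\{a\} \times S^{n-1}$, which is a hypersurface with trivial normal bundle (the normal direction being $\partial_t$ in the neck). Proposition \ref{P:Ricci_cl_crit} then provides the desired constant $c > 0$ such that $(M, c^2 g)$ is Ricci closable.

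The main technical difficulty lies in simultaneously enforcing three conditions on $\tilde f$: smooth attachment to $f$ at $t = 0$ (so that the neck glues smoothly onto the deformed $W$), evenness at $t = a$ (so that the doubled metric is smooth at the central slice), and strict positivity of Ricci curvature in the neck. This is essentially the same warped-product ODE analysis already carried out for the function $f$ in the proof of Proposition \ref{P:Ricci_cl_crit}, and I expect it to go through with only minor modifications.
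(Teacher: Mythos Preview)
Your overall strategy is sound and is essentially the same as the paper's: produce a $\Z/2$-symmetric positive-Ricci metric on $M_1\#\cdots\#M_\ell\#(-M_1)\#\cdots\#(-M_\ell)$ by doubling a half along a totally geodesic hypersurface sphere, then invoke Proposition~\ref{P:Ricci_cl_crit}. The difference is in how the half is built. You take Burdick's connected-sum core metric on $X=M_1\#\cdots\#M_\ell$ as a black box, then attach an external warped neck to make the boundary totally geodesic before doubling. The paper instead goes back to Perelman's docking station itself: the ambient sphere carries the doubly warped metric $dt^2+\cos^2(t)\,dx^2+R(t)^2\,ds_{n-1}^2$ on $[0,\tfrac{\pi}{2}]\times S^1\times S^{n-1}$, and reflection in the $S^1$-factor is already an isometry. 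By placing all $\ell$ geodesic balls on one half-circle $S^1_+$ and restricting, one obtains $(M_1\#\cdots\#M_\ell)\setminus {D^n}^\circ$ with totally geodesic boundary for free, with no Wraith deformation and no auxiliary neck needed. This is a bit more direct and avoids your ODE step; your approach, on the other hand, is more modular and would apply verbatim to any core metric on $X$, not just one coming from Perelman's hub.

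There is one genuine technical point to patch. Your appeal to Wraith's deformation ``exactly as in the proof of Proposition~\ref{P:Ricci_cl_crit}'' is not quite exact: there the boundary is totally geodesic, so the $1$-jets of $g_1$ and of $dt^2+f(t)^2h_0$ automatically agree. In your situation the boundary of $W$ is round with $\II>0$, but nothing in the definition of a core metric forces $\II$ to be umbilic, and if it is not, no warped product $dt^2+f(t)^2 g_{S^{n-1}}$ can match the $1$-jet of $g_X$, so \cite{Wr02} does not apply as stated. The fix is easy: either observe that Burdick's construction in \cite{Bu20a} actually produces a warped-product collar (so the boundary is umbilic), or skip Wraith entirely and attach your neck via Theorem~\ref{T:gluing}, choosing $\tilde f'(0)$ small enough that the sum of second fundamental forms is positive; the smoothing there is local and does not disturb the embedded $M_i$-pieces. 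Once this is addressed, your argument goes through.
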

	\begin{proof}
		We use Perelman's \textquotedblleft docking station\textquotedblright\ \cite{Pe97}, which, for any $\ell\in\N$ and $\nu>0$ sufficiently small is a metric of positive Ricci curvature on $S^n\setminus\sqcup_\ell {D^n}^\circ$ with round boundary components on which the second fundamental form is at least $-\nu$. This metric is the combination of the \textquotedblleft neck\textquotedblright\ of \cite[Section 2]{Pe97} and the \textquotedblleft ambient space\textquotedblright\ of \cite[Section 3]{Pe97}, see also \cite[Section 4]{Bu19}.
		
		More precisely, the \textquotedblleft ambient space\textquotedblright\ is a doubly warped product metric of positive sectional curvature on the sphere $S^n$ given by
		\[ dt^2+\cos(t)^2dx^2+R(t)^2ds_{n-1}^2 \]
		for $t\in[0,\tfrac{\pi}{2}]$ and $dx^2$ denotes the standard metric on $S^1$. The function $R$ is a smooth function which is odd at $t=0$ with $R'(0)=1$ and even at $t=\tfrac{\pi}{2}$, and satisfies $R''<0$. From this metric one now cuts out $\ell$ small disjoint geodesic balls along the circle $\{t=0\}$, and for a suitable choice of $R$ one can glue in $\ell$ copies of the \textquotedblleft neck metric\textquotedblright\ on $[0,1]\times S^{n-1}$, that transitions to the round metric and a sufficiently small second fundamental form.
		
		By choosing sufficiently small radii and placing all geodesic balls on one side of the circle, we can restrict this metric to the hemisphere $S^n_+$ defined by $[0,\tfrac{\pi}{2}]\times S^1_+\times S^{n-1}$, where $S^1_+\subseteq S^1$ is a half-circle, and we can still attach $\ell$ disjoint copies of the \textquotedblleft neck\textquotedblright. 
		
		By choosing $\nu$ sufficiently small, we can now glue $M_1\setminus{D^n}^\circ, \dots,M_\ell\setminus{D^n}^\circ$ to $S^n_+\setminus\sqcup_\ell{D^n}^\circ$ using Theorem~\ref{T:gluing}, which results in a metric of positive Ricci curvature on $(M_1\#\dots\# M_\ell)\setminus{D^n}^\circ$. Moreover, by the form of the original metric, doubling results in a smooth metric on
		\[ M_1\#\dots\# M_\ell\#(-M_1)\#\dots\#(-M_\ell), \]
		and interchanging $M_1\#\dots\# M_\ell$ and $-(M_1\#\dots\# M_\ell)$ defines an isometric $\Z/2$-action with fixed point set the hypersurface along which we have glued. Hence, the claim follows from Proposition \ref{P:Ricci_cl_crit}.
	\end{proof}
	
	\begin{lemma}\label{L:core_deform}
		Let $M^n$ be a closed manifold that admits a core metric $g$. Then there exists $\nu>0$ and a smooth  family $g_s$, $s\in(0,1]$, of Riemannian metrics on $M\setminus{D^n}^\circ$ with $g_1=g$ such that the following holds:
		\begin{enumerate}
			\item The volume and the Ricci curvatures of $g_s$ are bounded from below by positive constants that do not depend on $s$,
			\item For all $s\in(0,1]$ the induced metric on the boundary $S^{n-1}=\partial(M\setminus{D^n}^\circ)$ is the round metric of radius $1$ and the principal curvatures are bounded from below by $\nu$,
			\item As $s\to0$ the sequence $(M\setminus{D^n}^\circ,g_s)$ Gromov--Hausdorff converges to the disc $D^n$ equipped with a rotationally symmetric Riemannian metric with a singularity at the origin.
		\end{enumerate}
	\end{lemma}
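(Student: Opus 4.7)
The plan is to construct the family $g_s$ in two stages. In the first stage, spanning $s \in [s_1, 1]$ for some $s_1 \in (0, 1)$, I use a deformation of [Wr02] (as applied in the proof of Proposition~\ref{P:Ricci_cl_crit}) — preceded if necessary by a small adjustment that turns the boundary second fundamental form into a scalar multiple of the round metric — to connect $g_1 = g$ through a smooth family of positive Ricci metrics with round unit sphere boundary and principal curvatures at least $\nu$, ending at $g_{s_1}$ which takes the rotationally symmetric form $dt^2 + \phi(t)^2 ds_{n-1}^2$ on a collar $[0, T_0] \times S^{n-1}$ of the boundary. Here $\phi$ is concave with $\phi(0) = 1$ and $\phi'(0) = -\nu$; by choosing $\nu$ small, and possibly elongating the rotationally symmetric collar by further deformation, I arrange $|\phi'(T_0)| < 1$, which will be needed for the limit to have positive Ricci curvature.

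For the second stage, $s \in (0, s_1]$, I fix a decomposition $M \setminus {D^n}^\circ = K \cup_{S^{n-1}} ([0, 1] \times S^{n-1})$ with the second piece a collar of the boundary. On $K$ I put $g_s|_K = \lambda(s)^2 g_{s_1}|_K$ where $\lambda \colon (0, s_1] \to (0, 1]$ is smooth with $\lambda(s_1) = 1$ and $\lambda(s) \to 0$. On the collar, via an $s$-dependent arc-length reparameterization $[0, 1] \to [0, T_s]$, I use the rotationally symmetric metric $dt^2 + \phi_s(t)^2 ds_{n-1}^2$, where $\phi_s \colon [0, T_s] \to (0, 1]$ is a concave function satisfying $\phi_s(0) = 1$, $\phi_s'(0) = -\nu$, $\phi_s(T_s) = \lambda(s)\phi(T_0)$, and crucially $\phi_s'(T_s) = \phi'(T_0)$. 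Such interpolants exist for $T_s$ in the non-empty interval $[(1 - \lambda(s)\phi(T_0))/|\phi'(T_0)|,\, (1 - \lambda(s)\phi(T_0))/\nu]$. The matching $\phi_s'(T_s) = \phi'(T_0)$ is chosen so that the sum of second fundamental forms of the neck (at $t = T_s$) and of the scaled core (at $\partial K$) vanishes identically at the gluing interface, whence Theorem~\ref{T:gluing} produces a smooth positive-Ricci metric after an arbitrarily small smoothing near the interface.

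For the verification, Lemma~\ref{L:doubly_warped_Ric} shows that the Ricci curvature of $dt^2 + \phi_s(t)^2 ds_{n-1}^2$ is uniformly positive provided $\phi_s$ is uniformly strictly concave with $|\phi_s'|$ uniformly below $1$, both of which I can arrange since the $\phi_s$ converge smoothly to a limit $\phi_\infty$. The scaled core has Ricci $\lambda(s)^{-2}$ times that of $g_{s_1}|_K$, hence divergent; and the volume of the collar alone is uniformly bounded below, converging to the volume of the limit disc. As $s \to 0$, the scaled core collapses to a point while the collar $([0, T_s] \times S^{n-1}, dt^2 + \phi_s^2 ds_{n-1}^2)$ converges to $([0, T_\infty] \times S^{n-1}, dt^2 + \phi_\infty^2 ds_{n-1}^2)$ with $\phi_\infty(T_\infty) = 0$. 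After identifying the collapsed sphere at $t = T_\infty$ with the point-limit of the core, the Gromov--Hausdorff limit is the disc $D^n$ equipped with the rotationally symmetric metric $dr^2 + \phi_\infty(T_\infty - r)^2 ds_{n-1}^2$, which has a conical singularity at the origin of cone angle $2\pi|\phi'(T_0)| \neq 2\pi$.

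The hardest point will be verifying the uniform positive lower bound on Ricci curvature across all $s \in (0, s_1]$ — especially through the smoothing regions provided by Theorem~\ref{T:gluing}, whose parameters must be chosen uniformly in $s$ so that the induced Ricci degeneration is controlled. A secondary technicality is making the two-stage family $C^\infty$-smooth at $s = s_1$, which I expect to handle by a standard interpolation argument.
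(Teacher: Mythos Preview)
Your approach is correct in outline and shares the paper's central idea---rescale the core piece to zero while attaching a rotationally symmetric neck with controlled geometry---but the paper executes this far more economically. Two main differences:

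\textbf{No preliminary deformation.} Your first stage, using \cite{Wr02} to force the collar of $g$ into the form $dt^2+\phi(t)^2 ds_{n-1}^2$, is unnecessary. The paper simply takes $\nu>0$ so that the principal curvatures of the original core metric $g$ at the boundary are at least $2\nu$, and then glues the rescaled core $(M\setminus{D^n}^\circ,\,2\sin^2(\nu s)\,g)$ directly to the neck via Theorem~\ref{T:gluing}. The sum of second fundamental forms at the interface is strictly positive (not zero), and no $C^1$-matching of warping functions is required.

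\textbf{A single explicit neck.} Instead of an $s$-dependent family $\phi_s$ designed to meet matching conditions, the paper uses the fixed warped metric $h_s=dt^2+2\sin^2(\nu t)\,ds_{n-1}^2$ on $[s,\tfrac{\pi}{4\nu}]\times S^{n-1}$; only the inner endpoint moves. This makes the uniform lower Ricci bound, the uniform volume bound, and the Gromov--Hausdorff limit (the full interval $[0,\tfrac{\pi}{4\nu}]\times S^{n-1}$ with the same metric, collapsing at $t=0$) completely transparent. Your ``hardest point''---uniformity of the Ricci bound through the smoothing region---then reduces to the observation that the explicit smoothing of \cite{BWW19} depends smoothly on $s$.

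What your approach buys is that the gluing interface is $C^1$-matched, so the smoothing is milder; what it costs is an extra deformation stage, a custom family $\phi_s$ with several constraints to track, and a delicate junction at $s=s_1$. The paper's version avoids all of this by exploiting Theorem~\ref{T:gluing} at full strength (non-negative rather than vanishing sum of second fundamental forms).
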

	\begin{proof}
		The proof is an adaptation of \cite[Section 4.1]{CN13}. Let $g$ be a metric of positive Ricci curvature on $M\setminus {D^n}^\circ$ with round and convex boundary and let $\nu>0$ such that the principal curvatures at the boundary are all at least $2\nu$. For $s\in(0,\frac{\pi}{4\nu})$ we define the metric $h_s$ on $[s,\frac{\pi}{4\nu}]\times S^{n-1}$ by
		\[ h_s=dt^2+2\sin^2(\nu t)ds_{n-1}^2. \]
		By Lemma \ref{L:doubly_warped_Ric}, the Ricci curvatures of the metric $h_s$ are bounded from below by a positive constant independent of $s$. The boundary component $\{t=\frac{\pi}{4\nu}\}$ is round with principal curvatures given by $\nu$, while the boundary component $\{t=s\}$ is, after rescaling, round with principal curvatures given by $-\sqrt{2}\nu\cos(\nu s)> -2\nu$. Hence, we can glue the Riemannian manifolds 
		\[ (M\setminus{D^n}^\circ,2\sin^2(\nu s)g)\quad \text{ and }\quad ([s,\tfrac{\pi}{4\nu}]\times S^{n-1},h_s) \]
		using Theorem \ref{T:gluing} preserving the lower bounds on the Ricci curvatures. By the explicit form of the smoothing in Theorem \ref{T:gluing}, see \cite[Section 2]{BWW19}, we can arrange that the resulting family of metrics is smooth in $s$. This family is the required family $g_s$.
	\end{proof}
	
	\begin{proof}[Proof of Theorem \ref{T:core_tangent_cone}]
		We start with the metric $g$ on
		\[ X=M_1\#\dots\# M_\ell\#(-M_1)\#\dots\#(-M_\ell) \]
		constructed in Proposition \ref{P:Ricci_clos_conn_sum}. We set $M_{\ell+i}=-M_i$ for $i\in\{1,\dots,\ell\}$. For $\bar{s}=(s_1,\dots,s_{2\ell})\in(0,1]^{2\ell}$ we now define the metric $g_{\bar{s}}$ as the metric obtained from $g$ by replacing each core metric on $M_i$ by the metric $g_{s_i}$ constructed in Lemma \ref{L:core_deform} and smoothing the gluing area using Theorem \ref{T:gluing}. Then, after suitable rescaling, the family $\{g_{\bar{s}}\}$ satisfies the requirements of Theorem \ref{T:tang_cone_crit}, so that we obtain a non-collapsed Ricci limit space $(Y,d_Y,p)$ satisfying $\overline{\Omega}_{Y,p}=\overline{\{(X,g_{\bar{s}})\}}$
		
		Moreover, for each $i\in\{1,\dots,\ell\}$, the sequence of metrics $g_{\bar{s}}$ with $\bar{s}=(s,\dots,s,1,s,\dots,s)$, where the entry $1$ is at position $i$, converges to a metric $d_i$ on $M_i$ with $(2\ell-1)$ singularities as $s\to 0$. In particular, $(M_i,d_i)\in \overline{\Omega}_{Y,p}$.
	\end{proof}
	
	\begin{proof}[Proof of Corollary \ref{C:dim-4}]
		As explained in \cite{SY93}, if $M^4$ admits a Riemannian metric of positive scalar curvature, it is homeomorphic to either
		\[\#_\ell (S^2\times S^2)\quad\text{ or }\quad \#_k\C P^2\#_\ell(-\C P^2).\]
		For convenience we outline the argument. First note that, since $M$ is simply-connected, its second Stiefel--Whitney class equals its second Wu class and hence its intersection form is even if and only if it is spin. For the results on 4-manifolds and intersection forms we will use we refer to \cite[Section 1.2]{GS99}.
		
		Since in dimension $4$ the signature is a multiple of the $\hat{A}$-genus, it follows that $M$ has vanishing signature if it is spin. Hence, the intersection form of $M$ is isomorphic to that of $\#_{\ell}(S^2\times S^2)$ as they have the same rank, parity and signature. If $M$ is non-spin, its intersection form is odd, and hence its intersection form is equivalent to that of $\#_k\C P^2\#_{\ell}(-\C P^2)$ by Donaldson's theorem (see \cite{Do83}, \cite[Theorem 1.2.30]{GS99}). Hence, by Freedman's theorem (see \cite{Fr82}, \cite[Theorem 1.2.27]{GS99}), $M$ is homeomorphic to $\#_{\ell}(S^2\times S^2)$ when it is spin, and to $\#_k\C P^2\#_{\ell}(-\C P^2)$ when it is non-spin. By \cite{Bu19,Bu20a}, \cite{Re24a} all of these spaces admit core metrics, hence the first claim follows from Theorem \ref{T:core_tangent_cone}.
		
		For the second claim, assume that $M$ bounds a compact, oriented $5$-manifold. Then $M$ has vanishing signature. Further, since the signature is an oriented homeomorphism invariant, it follows that $M$ is homeomorphic to one of $\#_\ell (S^2\times S^2)$ and $\#_\ell\C P^2\#_\ell(-\C P^2)$. All these manifolds admit Ricci closable metrics by Proposition \ref{P:Ricci_clos_conn_sum} and Proposition \ref{P:SpSq} below.
	\end{proof}
	
	\section{Further examples of Ricci closable manifolds}\label{S:Ricci_clos_ex}
	
	In this section, we collect results to construct Ricci closable manifolds. The proofs directly follow from well-known construction methods for positive Ricci curvature. We therefore omit most of the proofs.
	
	\begin{proposition}\label{P:bundle}
		Let $E\xrightarrow{\pi}B$ be a fibre bundle with fibre $F$ and structure group $G$ such that $E$ is closed. Suppose the following:
		\begin{enumerate}
			\item $B$ admits a Riemannian metric $g_B$ of positive Ricci curvature,
			\item $F$ admits a $G$-invariant metric $g_F$ of positive Ricci curvature,
			\item There exists a compact Riemannian manifold $(\overline{F},\overline{g})$ of positive Ricci curvature with convex boundary isometric to $(F,g_F)$ such that the $G$-action on $F$ extends to an isometric action on $\overline{F}$.
		\end{enumerate}
		Then $E$ admits a Ricci closable metric $g_E$ such that $(E,g_E)\xrightarrow{\pi}(B,c^2g_B)$ is a Riemannian submersion with totally geodesic fibres isometric to ${c'}^2g_F$ for some $c,c'>0$.
	\end{proposition}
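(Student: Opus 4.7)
The plan is to verify the hypothesis of Lemma \ref{L:Ricci_cl_crit}, that is, to produce a compact Riemannian manifold of positive Ricci curvature whose convex boundary is isometric to $(E,g_E)$. Writing $E=P\times_G F$ for the principal $G$-bundle $P\to B$ underlying the structure group, the natural candidate is the associated bundle
\[ \bar E = P\times_G \bar F, \]
whose boundary is canonically identified with $E$; this is well-defined because the $G$-action extends isometrically to $(\bar F,\bar g)$.

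Fix a principal connection on $P$. For a parameter $t>0$, equip $\bar E$ and $E$ with the Vilms-type submersion metrics $g_{\bar E}^t$ and $g_E^t$ having horizontal part $\pi^*g_B$ and vertical parts $t^2\bar g$ and $t^2 g_F$ respectively. Since the $G$-action on $\bar F$ is isometric, the fibres of both $\bar E\to B$ and $E\to B$ are totally geodesic, and the metric induced on $\partial\bar E$ coincides with $g_E^t$. By the classical Ricci-positivity result for fibre bundles with totally geodesic fibres (essentially Nash's theorem; compare Lemma \ref{L:doubly_warped_Ric}), both metrics have positive Ricci curvature for all $t$ sufficiently small: along horizontal directions the Ricci curvature is a bounded perturbation of $\pi^*\Ric^{g_B}>0$, whereas along vertical directions it is dominated by $t^{-2}\Ric^{\bar g}>0$ up to a bounded correction from the $A$-tensor of the connection.

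The main obstacle is to verify convexity of $\partial\bar E$ in $(\bar E,g_{\bar E}^t)$ for small $t$. Since the $G$-action is isometric and preserves $\partial\bar F$, the outward unit normal of $\partial\bar F\subseteq\bar F$ extends to a $G$-invariant vertical field on $\partial\bar E$. Using O'Neill's identities for Riemannian submersions with totally geodesic fibres, the shape operator of $\partial\bar E$ decomposes into blocks with respect to the horizontal-vertical splitting: after unit-normalization, the vertical-vertical block equals $t^{-1}\II_{\partial\bar F}^{\bar g}$, which is positive definite by hypothesis and of order $t^{-1}$, while the horizontal-horizontal and mixed blocks are controlled by the $A$-tensor of the connection and the horizontal variation of the normal, and are subdominant in $t$. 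A direct scaling argument then shows that the full shape operator is positive definite for $t$ sufficiently small.

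With $(\bar E,g_{\bar E}^t)$ of positive Ricci curvature and convex boundary isometric to $(E,g_E^t)$, Lemma \ref{L:Ricci_cl_crit} supplies a constant $c>0$ such that $c^2g_E^t$ is Ricci closable. Setting $c'=ct$, the identity $c^2g_E^t=\pi^*(c^2g_B)+(c')^2g_F$ realizes $(E,c^2 g_E^t)\to (B,c^2g_B)$ as a Riemannian submersion with totally geodesic fibres isometric to $(c')^2 g_F$, yielding the claimed Ricci closable metric $g_E:=c^2 g_E^t$.
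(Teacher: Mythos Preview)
Your overall strategy—fill in the fibre to obtain $\bar E=P\times_G\bar F$ and then invoke Lemma~\ref{L:Ricci_cl_crit}—is exactly the right one, and the positive Ricci curvature of the submersion metric $g_{\bar E}^t$ for small $t$ is standard. The gap is in the convexity step.

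Your claim that ``a direct scaling argument'' makes the full shape operator of $\partial\bar E$ positive definite is incorrect. With the Vilms submersion metric (totally geodesic fibres), for any horizontal vector $X$ one has
\[
\II(X,X)=-g^t\bigl(\hat\nu,\nabla_X X\bigr)=-g^t\bigl(\hat\nu,A_X X\bigr)=0,
\]
since $A_XX=\tfrac12\mathcal V[X,X]=0$. Thus the horizontal--horizontal block of $\II$ vanishes identically, independently of $t$; it is not merely ``subdominant''. (A similar computation, using that $[X,u]$ is vertical and tangent to $\partial\bar E$ whenever $X$ is basic and $u$ is vertical and tangent to $\partial\bar F$, shows that the mixed block vanishes as well.) Consequently $\II\ge 0$ with nontrivial kernel, and $\partial\bar E$ is never strictly convex for this family of metrics, no matter how small $t$ is. The product case $\bar E=B\times\bar F$ already exhibits this: the boundary $B\times\partial\bar F$ has $\II$ equal to the fibre second fundamental form in the vertical directions and zero in the $B$-directions.

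To repair the argument you must modify $g_{\bar E}^t$ near the boundary. One option is to warp the base factor in a collar: write $\bar g=dr^2+h_r$ near $\partial\bar F$ and replace the horizontal part $\pi^*g_B$ by $\phi(r)^2\pi^*g_B$ with $\phi'(0)<0$, which produces a positive contribution to $\II$ on horizontal vectors while keeping $\Ric>0$ for suitable $\phi$ and small $t$. Alternatively, since $\II\ge 0$ and $\Ric>0$, one can apply a deformation result such as \cite[Proposition~2.15]{Re24a} (as the paper does in the proof of Proposition~\ref{P:S1-bundle}) to pass from $\II\ge 0$ to strict convexity without changing the induced boundary metric. Either route then feeds into Lemma~\ref{L:Ricci_cl_crit} as you intended; this is in line with the references \cite[Section~9.G]{Be87}, \cite[Theorem~2.7.3]{GW09}, \cite{RW23b} that the paper cites for this proposition.
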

	This follows for example from the methods of \cite[Section 9.G]{Be87}, \cite[Theorem 2.7.3]{GW09}, \cite{RW23b} to lift metrics of positive Ricci curvature along fibre bundles.
	
	In particular, the assumptions of Proposition \ref{P:bundle} are satisfied when $\pi$ is a linear $S^p$-bundle with $p\geq 2$ whose base admits a Riemannian metric of positive Ricci curvature. Indeed, by setting $g_F=ds_p^2$ and $\overline{F}=D^{p+1}$ equipped with the induced metric of a geodesic ball in the round sphere of some suitable radius condition (3) is satisfied. In the case of a 1-dimensional fibre we have the following result:
	
	\begin{proposition}\label{P:S1-bundle}
		Let $E\xrightarrow{\pi}B$ be a principal $S^1$-bundle such that $B$ is closed and admits a Riemannian metric $g_B$ of positive Ricci curvature and $E$ has finite fundamental group. Then $E$ admits a Ricci closable metric $g_E$ such that $(E,g_E)\xrightarrow{\pi}(B,c^2g_B)$ is a Riemannian submersion for some $c>0$.
	\end{proposition}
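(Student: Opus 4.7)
The plan is to invoke Lemma~\ref{L:Ricci_cl_crit}: it suffices to realize $(E, g_E)$, for some submersion metric $g_E$, as a convex boundary component of a compact Riemannian manifold of positive Ricci curvature. The natural bounding manifold is $N = D(L)$, the disk bundle of the complex line bundle $L \to B$ whose unit circle bundle is the given principal $S^1$-bundle $E$; then $\partial N = E$.

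On $N$, I would consider a metric of the form
\[ g_N = dr^2 + \phi(r)^2 \omega^2 + h(r)^2 \pi^* g_B \]
on the complement of the zero section, where $r$ is the fiberwise radial coordinate, $\omega$ is a connection 1-form on the principal $S^1$-bundle $E \to B$, and $\phi, h$ are smooth positive functions on $(0, r_0]$. Smooth closure across the zero section $\{r = 0\}$ requires $\phi$ to be odd with $\phi'(0) = 1$ and $h$ to be even with $h(0) > 0$. The hypothesis that $\pi_1(E)$ is finite is precisely the classical condition allowing one to choose a connection $\omega$ whose curvature $d\omega$ contributes positively to the Ricci tensor in the vertical and mixed directions, enabling a Nash/B\'erard-Bergery type construction of positive Ricci curvature on $D(L)$; see, e.g., \cite[Section 9.G]{Be87}.

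With positive Ricci curvature on $N$ arranged in this way, the induced metric on $\partial N$ is $\phi(r_0)^2 \omega^2 + h(r_0)^2 \pi^* g_B$, which is exactly a Riemannian submersion metric onto $(B, h(r_0)^2 g_B)$. I would then apply the boundary deformation of \cite[Proposition 2.15]{Re24a} to make $\partial N$ convex while preserving both positive Ricci curvature on $N$ and the induced metric on the boundary. Lemma~\ref{L:Ricci_cl_crit} then delivers a further rescaling after which the metric is Ricci closable, and the submersion structure is preserved under rescaling by absorbing all constants into $c$.

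The main obstacle is the first step: identifying and verifying the correct Ricci curvature inequalities for the metric $g_N$ in terms of the connection curvature $d\omega$, and producing explicit $\phi$ and $h$ satisfying them. This is a careful computation using the analogue of Lemma~\ref{L:doubly_warped_Ric} adapted to a principal $S^1$-bundle (where the vertical direction is twisted by $\omega$), and it is precisely the content of the Nash/B\'erard-Bergery lifting methodology; once this positive Ricci metric on $D(L)$ is in hand, the rest of the argument is formal.
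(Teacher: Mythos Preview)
Your overall strategy matches the paper's: fill $E$ by the associated $D^2$-bundle $\overline{E}$, equip it with a connection metric of positive Ricci curvature, push the (initially totally geodesic) boundary to convex via \cite[Proposition~2.15]{Re24a}, and invoke Lemma~\ref{L:Ricci_cl_crit}. There is, however, a genuine gap.

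Lemma~\ref{L:Ricci_cl_crit} requires the boundary metric $g_E$ itself to have \emph{strictly} positive Ricci curvature (and indeed any Ricci closable metric must, cf.\ the remark after Definition~\ref{D:Ricci_closably}). Your induced boundary metric is the Kaluza--Klein metric $\phi(r_0)^2\omega^2 + h(r_0)^2\pi^*g_B$ on the circle bundle $E$. For such a metric the Ricci curvature in the fibre direction is a non-negative multiple of $\lvert d\omega\rvert^2$, and this vanishes wherever the curvature form does. The hypothesis that $\pi_1(E)$ is finite only forces $[d\omega]\neq 0$ in $H^2(B;\R)$; it does not allow you to choose a connection with nowhere-vanishing curvature, so you cannot conclude $\Ric(g_E)>0$ from your construction. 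Your attribution of the finite-$\pi_1$ hypothesis to obtaining positive Ricci on $D(L)$ is also misplaced: that already follows from $\Ric(g_B)>0$ via \cite[9.59, 9.70]{Be87} and does not use $\pi_1(E)$ at all.

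The paper closes exactly this gap. It takes the connection to have \emph{harmonic} curvature form, so that the induced metric on $E$ has non-negative Ricci curvature by \cite{BB78}; then, and this is where finiteness of $\pi_1(E)$ actually enters, it invokes the deformation result of \cite{GPT98} to pass to a nearby metric on $E$ of strictly positive Ricci curvature. After applying \cite[Proposition~2.15]{Re24a} and a further small perturbation of the filling realising this deformed boundary metric while keeping convexity and positive Ricci on $\overline{E}$, Lemma~\ref{L:Ricci_cl_crit} applies.
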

	\begin{proof}
		As in Proposition \ref{P:bundle}, the corresponding linear $D^2$-bundle $\overline{E}$ has positive Ricci curvature when we equip it with a submersion metric with totally geodesic fibres equipped with the metric of a sufficiently small round hemisphere (see e.g.\ \cite[9.59 and 9.70]{Be87}). Here we can freely choose the principal connection. In particular, the boundary $E$ is totally geodesic as well. If we choose this metric to have harmonic curvature form, the induced metric on the boundary has non-negative Ricci curvature by \cite{BB78} and this metric can be deformed to have positive Ricci curvature by \cite{GPT98}. Hence, by the deformation of \cite[Proposition 2.15]{Re24a}, followed by a small deformation that makes the Ricci curvatures on $E$ positive, we obtain a metric on $\overline{E}$ with the required properties.
	\end{proof}
	Proposition \ref{P:S1-bundle} can for example be applied to connected sums
	\[L(m;1,\overset{n+1}{\cdots},1)\#_\ell (S^2\times S^{2n-1})  \]
	when $m$ is odd or $n$ is odd, and to
	\[L(m;1,\overset{n+1}{\cdots},1)\#_\ell (S^2\ttimes S^{2n-1})  \]
	when $m$ and $n$ are even, where $S^2\ttimes S^{2n-1}$ denotes the total space of the unique non-trivial linear $S^{2n-1}$-bundle over $S^2$ and $\ell\in\N_0$. Indeed, by \cite[Theorems A and B]{GR23}, these manifolds are total spaces of principal $S^1$-bundles over $\#_{\ell+1}\C P^{n}$.
	
	When $m=1$ this shows that the manifold $\#_\ell(S^2\times S^{2n-1})$ admits a Ricci closable metric for all $\ell\geq 0$. By Proposition \ref{P:Ricci_clos_conn_sum} the same holds for connected sums of other products of spheres when $\ell$ is even. We now extend this to odd values of $\ell$ and arbitrary dimensions of the spheres involved.	
	\begin{proposition}\label{P:SpSq}
		For $p,q\geq 2$, $\ell\geq 0$, there exists a Ricci closable metric on the connected sum $\#_\ell (S^p\times S^q)$.
	\end{proposition}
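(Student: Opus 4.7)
The plan is to split by the parity of $\ell$ and produce a $\Z/2$-invariant metric of positive Ricci curvature to which the preceding tools apply. The case $\ell = 0$ is immediate from Lemma \ref{L:Ricci_cl_crit}: the round metric on $S^{p+q}$ is isometric to the strictly convex boundary of any geodesic ball of radius less than $\pi/2$ in the round $S^{p+q+1}$.

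For even $\ell = 2k \geq 2$, note that $S^p \times S^q$ admits a core metric by item (3) in the introduction (products of manifolds with core metrics), and since $p \geq 2$, a reflection of the $S^p$-factor through an equator provides an orientation-reversing self-diffeomorphism, so $-(S^p\times S^q) \cong S^p\times S^q$. Thus $\#_{2k}(S^p\times S^q)$ has the form required by Proposition \ref{P:Ricci_clos_conn_sum} with all summands equal to $S^p\times S^q$, and the proposition supplies a Ricci closable metric.

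For odd $\ell = 2k+1$, I would construct a $\Z/2$-symmetric metric of positive Ricci curvature on $\#_{2k+1}(S^p\times S^q)$ whose fixed set is a hypersurface with trivial normal bundle, and then invoke Proposition \ref{P:Ricci_cl_crit}. Start with $(S^p\times S^q, ds_p^2 + ds_q^2)$ equipped with the isometric $\Z/2$-action $(x,y) \mapsto (r(x), y)$, where $r$ is a reflection of $S^p$ through an equator; the fixed set is the two-sided hypersurface $S^{p-1}\times S^q$, which settles $k = 0$. For $k \geq 1$, pick $k$ points in $S^p\times S^q$ off this hypersurface and include their mirror images, obtaining $k$ symmetric pairs. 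At each pair, perform a $\Z/2$-equivariant connected sum with a copy of $S^p\times S^q$ carrying its core metric: modify the base metric identically in two small disjoint mirror-neighborhoods to a rotationally symmetric form with round, nearly-geodesic boundary (following Perelman's neck construction used in the proof of Proposition \ref{P:Ricci_clos_conn_sum}), then apply Theorem \ref{T:gluing} to attach the core metrics. By symmetry of the local modifications, the $\Z/2$-action extends to the resulting connected sum, still fixing $S^{p-1}\times S^q$, and Proposition \ref{P:Ricci_cl_crit} applies.

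The main obstacle is ensuring equivariance in the odd case: Perelman's original docking-station construction used in Proposition \ref{P:Ricci_clos_conn_sum} exploits the doubly-warped product structure of $S^n$, which is not available on $S^p\times S^q$. I would circumvent this by performing the $k$ connected sums one pair at a time, each metric modification being localized to two disjoint mirror-neighborhoods and chosen identically on the two sides, so that both positive Ricci curvature (via Theorem \ref{T:gluing}) and the $\Z/2$-symmetry about $S^{p-1}\times S^q$ are preserved at each step.
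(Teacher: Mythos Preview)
Your treatment of even $\ell$ matches the paper's, and your handling of $\ell\in\{0,1\}$ is correct; the $\ell=1$ argument via the equatorial reflection on $(S^p\times S^q,ds_p^2+ds_q^2)$ with fixed hypersurface $S^{p-1}\times S^q$ is a clean direct application of Proposition~\ref{P:Ricci_cl_crit}. The gap is in the odd case for $\ell\geq 3$.

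You correctly note that Perelman's docking station relies on the doubly warped structure of $S^n$, but your proposed workaround does not resolve the difficulty. The obstacle is not equivariance, nor is it performing several connected sums simultaneously versus one pair at a time: it is performing even a \emph{single} connected sum. To glue a copy of $(S^p\times S^q)\setminus{D^n}^\circ$ carrying its core metric into $(S^p\times S^q,ds_p^2+ds_q^2)$ via Theorem~\ref{T:gluing}, you must first exhibit, in the product metric, an embedded disc whose boundary is intrinsically round and whose second fundamental form is at least $-\nu$. Small geodesic balls in $ds_p^2+ds_q^2$ do not have round boundary spheres, and the claim that one can ``modify the base metric \ldots\ to a rotationally symmetric form with round, nearly-geodesic boundary'' while keeping $\Ric>0$ is exactly the step that core metrics and the docking station were invented to sidestep; it is not known for general positive Ricci metrics. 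Perelman's neck transitions between two round cross sections with different principal curvatures; it does not manufacture a round boundary out of a non-round one.

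The paper avoids this by building $\#_\ell(S^p\times S^q)$ differently for odd $\ell$: it uses the Sha--Yang construction, realising the connected sum via $(\ell+1)$ surgeries on $S^{p+1}\times S^{q-1}$, i.e.\ removing $(\ell+1)$ geodesic balls from the round $S^{p+1}$, taking the product with a small round $S^{q-1}$, and gluing in copies of $S^p\times D^q$. Since $\ell+1$ is even, the balls can be placed in mirror pairs with respect to the equatorial reflection of $S^{p+1}$, producing an isometric $\Z/2$-action on the result with fixed hypersurface $S^p\times S^{q-1}$, and Proposition~\ref{P:Ricci_cl_crit} applies. The crucial point is that the ambient factor is the \emph{round} $S^{p+1}$, so the removed balls already have round boundaries and no local deformation of the type you propose is required.
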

	\begin{proof}
		If $\ell$ is even, this follows from Proposition \ref{P:Ricci_clos_conn_sum} since there is an orientation-preserving diffeomorphism $S^p\times S^q\cong-(S^p\times S^q)$. Hence, it remains to consider the case where $\ell$ is odd.
		
		For that we use the construction of Sha--Yang \cite{SY91} (see also \cite[Section 5.5]{Re22} for a discussion of this result), where $\#_\ell (S^p\times S^q)$ is constructed as the manifold obtained from $(\ell+1)$ surgeries on the second factor of $S^{p+1}\times S^{q-1}$, i.e.\
		\[ \#_\ell(S^p\times S^q)\cong \left(\left(S^{p+1}\setminus\bigsqcup_{\ell+1}{D^{p+1}}^\circ\right)\times S^{q-1}\right)\cup_{\sqcup_{\ell+1}(S^p\times S^{q-1})}\bigsqcup_{\ell+1}(S^{p}\times D^q). \]
		The starting point for the construction is the round metric on $S^{p+1}$, from which $(\ell+1)$ pairwise disjoint geodesic balls are removed. After taking the product with $S^{q-1}$ equipped with a round metric of sufficiently small radius, $(\ell+1)$ copies of $(S^p\times D^q)$, equipped with suitable metrics that glue smoothly with the metric on $(S^{p+1}\setminus\sqcup_{\ell+1}{D^{p+1}}^\circ)\times S^{q-1}$, are attached.
		
		Since we can freely choose the positions and radii of the geodesic balls in $S^{p+1}$, we can arrange that, since $(\ell+1)$ is even, each one gets mapped to another one under the isometric $\Z/2$-action on $S^{p+1}$ given by reflection along the equator. In this way we obtain a metric of non-negative Ricci curvature, and strictly positive Ricci curvature if $q>2$, on the glued space that is invariant under a $\Z/2$-action, whose fixed point set is given by $S^p\times S^{q-1}$ equipped with the product of two round metrics. For $q=2$ one additionally applies the deformation results of \cite{Eh76}, which, as explained in \cite[p.\ 20]{Eh76}, can be arranged to preserve the $\Z/2$-invariance of the metric. Hence, we can apply Proposition \ref{P:Ricci_cl_crit} to obtain a Ricci closable metric on $\#_\ell(S^p\times S^q)$.
	\end{proof}
	
	Finally, we consider Ricci closable metrics with large isometry group.
	
	\begin{proposition}\label{P:hom_space}
		Let $G/H$ be a homogeneous space with compact Lie groups $H\subseteq G$ such that the fundamental group of $G/H$ is finite. Suppose that there exists a subgroup $H\subseteq K\subseteq G$ such that $G/K$ has finite fundamental group and $K/H$ is diffeomorphic to a sphere of dimension $d\geq 1$. Then $G/H$ admits a homogeneous metric that is Ricci closable.
	\end{proposition}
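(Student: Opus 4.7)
The plan is to realise the natural projection $\pi\colon G/H\to G/K$ as a fibre bundle with fibre $K/H\cong S^d$ and apply the bundle constructions of Proposition \ref{P:bundle} (when $d\geq 2$) or Proposition \ref{P:S1-bundle} (when $d=1$), making every choice $G$-equivariantly so that the final metric on $G/H$ is homogeneous. Concretely, I would identify $G/H$ with the associated bundle $G\times_K(K/H)$ over $G/K$, whose structure group is $K$ acting on $K/H$ by left translation. By the Montgomery--Samelson--Borel classification of transitive actions of compact Lie groups on spheres, the action of the identity component of $K$ on $K/H\cong S^d$ is conjugate to the restriction of an orthogonal linear action $K\hookrightarrow O(d+1)$. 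Hence $K/H$ carries a $K$-invariant round metric $g_F$, which extends isometrically to the round unit ball $(D^{d+1},g_{\mathrm{std}})$; this ball has positive Ricci curvature, strictly convex boundary, and a natural $K$-action extending the one on its boundary, supplying the data $(\overline{F},\overline{g})$ of Proposition \ref{P:bundle}(3). Since $G/K$ is a compact homogeneous space with finite fundamental group, it admits a $G$-invariant Riemannian metric $g_B$ of positive Ricci curvature by the classical results of Nash and B\'erard-Bergery.

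For $d\geq 2$, I would then apply Proposition \ref{P:bundle} with these $g_B$, $g_F$ and $(\overline{F},\overline{g})$, taking the principal connection on $G\to G/K$ to be $G$-invariant --- for example, the $\mathrm{Ad}(K)$-invariant complement of $\mathfrak{k}$ in $\mathfrak{g}$ with respect to a bi-invariant metric on $G$. The resulting Riemannian submersion metric on $G/H$ is automatically $G$-invariant, hence homogeneous, while being Ricci closable by the proposition. For $d=1$, the fibre $K/H\cong S^1$ is a Lie group; the transitive $K$-action factors through an $S^1$-action and realises $\pi$ as a principal $S^1$-bundle over $G/K$. Since $G/H$ has finite fundamental group by hypothesis, Proposition \ref{P:S1-bundle} applies, and again choosing the principal connection $G$-invariantly yields a homogeneous Ricci closable metric.

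The main point requiring verification is that the auxiliary deformations used inside the proofs of Propositions \ref{P:bundle} and \ref{P:S1-bundle} --- the warped-product cylinder attachment of Lemma \ref{L:Ricci_cl_crit}, the boundary convexification via \cite[Proposition 2.15]{Re24a}, and the B\'erard-Bergery type positivity deformation invoked in the circle case --- all preserve the $G$-action. This is essentially automatic, since each deformation depends only on a coordinate transverse to a $G$-invariant hypersurface and on $G$-invariant tensor fields (the induced boundary metric, the curvature form of the connection, and so on), so $G$-equivariance propagates through the construction. Consequently, $G/H$ inherits a homogeneous metric that is Ricci closable.
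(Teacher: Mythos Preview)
Your approach for $d\geq 2$ is correct and amounts to the same construction as the paper's: both build the disc bundle $G\times_K D^{d+1}$ over $G/K$ and equip it with a $G$-invariant metric of positive Ricci curvature with convex boundary, after which Lemma~\ref{L:Ricci_cl_crit} applies. You reach this via Proposition~\ref{P:bundle}, whereas the paper cites \cite{GZ02} directly, observing that $G\times_K D^{d+1}$ (with $D^{d+1}$ identified with the cone over $K/H$) is a cohomogeneity-one $G$-manifold with singular orbit $G/K$ and principal orbit $G/H$; the Grove--Ziller construction then supplies the required invariant metric uniformly for all $d\geq 1$, and the boundary metric is automatically homogeneous.

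Your treatment of $d=1$, however, contains a gap: it is not true in general that $G/H\to G/K$ is a \emph{principal} $S^1$-bundle when $K/H\cong S^1$, since that would force $H$ to be normal in $K$. For a concrete counterexample satisfying all hypotheses of the proposition, take $G=SO(3)$, $K\cong O(2)$ the stabiliser of an unoriented line in $\R^3$, and $H=\{1,r\}$ with $r$ a reflection in $O(2)$. Then $K/H\cong S^1$, both $G/K\cong\R P^2$ and $G/H$ have finite fundamental group, yet $H$ is not normal in $K$ and the structure group of the circle bundle is genuinely $O(2)$, not $SO(2)$. Hence Proposition~\ref{P:S1-bundle} does not apply as stated. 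The paper's route via \cite{GZ02} sidesteps this entirely, since the cohomogeneity-one construction on $G\times_K D^2$ makes no principality assumption. Your argument is easily repaired---the proof of Proposition~\ref{P:S1-bundle} goes through for linear $S^1$-bundles with structure group $O(2)$---but the reduction to a principal bundle, as written, is incorrect.
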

	
	This follows from \cite{GZ02}, since under these assumptions the manifold $G/H$ is the boundary of the cohomogeneity one manifold $G\times_K D^{d+1}$, where $D^{d+1}$ is identified with the cone over $K/H$.
	
	For example, Proposition \ref{P:hom_space} can be applied to the quotient of $S^{4n-1}\subseteq\Quat^n$ by the standard action of the binary dihedral group $Dic_m\subseteq S^3=Sp(1)$. Indeed, this space is the homogeneous space $G/H=Sp(n)/Sp(n-1)Dic_m$, and the group $K$ is given by $Sp(n-1)Pin(2)$.
	
	\begin{proposition}\label{P:cohom_one}
		Let $M$ be a closed manifold that admits a cohomogeneity one action of a compact Lie group $G$ such that the orbit space $M/G$ is homeomorphic to the interval $[-1,1]$ and such that both $M$ and its principal orbits have finite fundamental group. Suppose that the associated group diagram $H\subseteq K_\pm\subseteq G$ satisfies $K_+=K_-$. Then $M$ admits a $G$-invariant Riemannian metric that is Ricci closable.
	\end{proposition}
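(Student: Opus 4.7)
The plan is to apply Proposition \ref{P:Ricci_cl_crit} to a natural $\Z/2$-symmetry on $M$ that arises precisely because $K_+=K_-$. Writing $K:=K_+=K_-$ and $d:=\dim(K/H)$, the cohomogeneity one structure yields the decomposition
\[ M\cong (G\times_K D^{d+1})\cup_{G/H}(G\times_K D^{d+1}), \]
so $M$ is the double of the disc bundle $G\times_K D^{d+1}$ along its boundary $G/H$. There is then a canonical smooth involution on $M$ which interchanges the two halves and whose fixed point set is precisely the central principal orbit $G/H$; being two-sided, this hypersurface has trivial normal bundle.

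To produce a $G$-invariant Riemannian metric of positive Ricci curvature on $M$ that is also invariant under this involution, I would invoke the main result of Grove--Ziller \cite{GZ02}, which, since $\pi_1(M)$ is finite, provides a $G$-invariant metric of positive Ricci curvature on $M$. Parametrizing the regular part of the orbit space by $t\in(-1,1)$ with $t=0$ the middle principal orbit and $t=\pm 1$ the two singular orbits, the Grove--Ziller construction produces a metric of the form $dt^2+g_t$, where $g_t$ is a family of $G$-invariant metrics on $G/H$ satisfying appropriate smoothness conditions at $t=\pm 1$ so as to extend smoothly across the singular orbits $G/K_\pm$. Because $K_+=K_-$, these smoothness conditions at $t=1$ and $t=-1$ are identical, so one may choose a symmetric profile $g_{-t}=g_t$; the resulting metric is then invariant under the reflection $t\mapsto -t$ and extends smoothly over both singular orbits using one and the same disc bundle data.

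With such a $G$-invariant, $\Z/2$-invariant metric $g$ of positive Ricci curvature in hand, whose fixed point set is the hypersurface $\{0\}\times G/H$ with trivial normal bundle, Proposition \ref{P:Ricci_cl_crit} immediately produces $c>0$ such that $(M,c^2g)$ is Ricci closable. The only technical point is ensuring that the Grove--Ziller metric can be made symmetric, which is the step I expect to require the most care; however, since the smoothing profiles near the two singular orbits depend only on the pair $(K,H)$ (and on free parameters that one is free to set identically on both sides), this symmetrization is a direct adaptation of the original construction rather than a genuine obstacle.
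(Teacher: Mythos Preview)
Your proposal is correct and follows essentially the same route as the paper: invoke the Grove--Ziller construction \cite{GZ02} to obtain a $G$-invariant metric of positive Ricci curvature that is symmetric under the reflection $t\mapsto -t$ (which is possible precisely because $K_+=K_-$), observe that this reflection is an isometric $\Z/2$-action with fixed point set the two-sided hypersurface $G/H$, and apply Proposition~\ref{P:Ricci_cl_crit}. Your added discussion of why the Grove--Ziller profile can be chosen symmetrically is a helpful elaboration of a point the paper leaves implicit, but it is not a different argument.
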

	
	See e.g.\ \cite[Section 1]{GZ02} for an introduction to cohomogeneity one manifolds. The proof then directly follows from the construction in \cite{GZ02} together with Proposition \ref{P:Ricci_cl_crit} since the $\Z/2$-action that interchanges the two halves $\pi^{-1}([-1,0])$ and $\pi^{-1}([0,1])$, where $\pi\colon M\to M/G$ denotes the projection, is isometric and has fixed point set $G/H$.
	
	For example, Proposition \ref{P:cohom_one} can be applied to $\C P^{n}\#(-\C P^n)$, which has group diagram $H\subseteq K_\pm\subseteq G$ with $H=U(n-1)$, $K_+=K_-= U(n-1)U(1)$ and $G=U(n)$. In fact, since the principal orbits have codimension two, it follows from the construction in \cite{GZ00} that we can construct a Ricci closable metric that has non-negative sectional curvature.
	
	\appendix
	
	\section{Tangent cones of collapsed Ricci limit spaces} \label{S:collapsed}
	
	In the context of surgery on manifolds of positive Ricci curvature, Sha--Yang \cite{SY91} constructed a metric on $S^n\times D^m$ of $\Ric\geq0$ that is close to the product $C(S^n,ds_n^2)\times (S^{m-1},R^2ds_{m-1}^2)$ for some $R>0$ near the boundary. A consequence of this construction is the following theorem. We include its proof for convenience.
	
	\begin{theorem}\label{T:tangent_cone_collapsed}
		Let $(M^n,g)$ be a closed Riemannian manifold with $\Ric\geq (n-1)$. Then, for any $m\geq 2$ there exists a complete Riemannian metric of $\Ric\geq 0$ on $M\times \R^m$ with asymptotic cone given by $C(M,g)$.
	\end{theorem}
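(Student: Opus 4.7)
The plan is to use a doubly warped product ansatz on $M\times\R^m$. Parametrising $\R^m$ in polar coordinates as $[0,\infty)\times S^{m-1}$ with the $S^{m-1}$-factor collapsed at $r=0$, I consider a metric of the form
\[ g_{M,m} = dr^2 + \varphi(r)^2 g + \psi(r)^2 ds_{m-1}^2 \]
for smooth functions $\varphi,\psi\colon[0,\infty)\to[0,\infty)$. For $g_{M,m}$ to extend smoothly to a Riemannian metric on $M\times\R^m$, I require $\varphi$ to be even at $0$ with $\varphi(0)>0$, and $\psi$ to be odd at $0$ with $\psi'(0)=1$. Under the additional asymptotic conditions $\varphi(r)=r+O(1)$ and $\psi(r)=O(1)$ as $r\to\infty$, a rescaling argument shows that the asymptotic cone of $(M\times\R^m,g_{M,m})$ is $C(M,g)$: substituting $s=r/R$ in $g_{M,m}/R^2$, the $M$-factor converges to $s^2 g$ while the $S^{m-1}$-factor collapses.

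The Ricci curvature of $g_{M,m}$ is computed by Lemma \ref{L:doubly_warped_Ric}, and combined with the hypothesis $\Ric(g)\geq (n-1)g$, the condition $\Ric(g_{M,m})\geq 0$ reduces to the three pointwise inequalities
\begin{align*}
  n\frac{\varphi''}{\varphi} + (m-1)\frac{\psi''}{\psi} &\leq 0,\\
  \frac{\varphi''}{\varphi} + (n-1)\frac{(\varphi')^2-1}{\varphi^2} + (m-1)\frac{\varphi'\psi'}{\varphi\psi} &\leq 0,\\
  \frac{\psi''}{\psi} + (m-2)\frac{(\psi')^2-1}{\psi^2} + n\frac{\varphi'\psi'}{\varphi\psi} &\leq 0.
\end{align*}
Following Sha--Yang \cite{SY91}, I would construct $\varphi,\psi$ explicitly: take $\varphi$ to be non-decreasing with $\varphi'\leq 1$, even at $0$ with $\varphi(0)>0$, and equal to $r+c$ for $r$ past a threshold; and take $\psi$ to be odd at $0$ with $\psi'(0)=1$, rising briefly to a maximum, then decreasing and asymptoting to a small positive constant, with $\psi''\leq 0$ for $r$ away from zero. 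The sign constraints $\varphi'\in[0,1]$ and $\psi'\leq 0$ (outside a neighbourhood of the origin) make the second and third terms in each of the last two inequalities non-positive.

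The main obstacle is that $\varphi'$ must rise from $0$ to $1$, so $\varphi''>0$ somewhere in the transition region, and the first inequality cannot be arranged by a pointwise sign argument alone. The key is to balance this unavoidable positive contribution against the strictly negative slack $(n-1)[(\varphi')^2-1]/\varphi^2$ available in the second inequality whenever $\varphi'<1$, and the analogous slack in the third inequality. Constructing the explicit profile of $\varphi,\psi$ that arranges this balance so that all three inequalities hold throughout is the computational core of the Sha--Yang construction in \cite[Section~2]{SY91}, which I would recall or cite here.
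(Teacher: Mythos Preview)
Your setup is exactly the paper's: the same doubly warped product ansatz, the same smoothness conditions at $r=0$, the same asymptotics, and the same three Ricci inequalities. The divergence is in the description of the warping profile.

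Your qualitative picture of $\psi$ is internally inconsistent and does not match the Sha--Yang construction. You ask $\psi$ to rise to a maximum, then decrease and asymptote to a positive constant, while also having $\psi''\leq 0$ away from the origin. But a function decreasing to a positive limit must eventually have $\psi''>0$ to level off; once $\varphi$ is linear this forces the first inequality $n\varphi''/\varphi+(m-1)\psi''/\psi\leq 0$ to fail. In the actual construction $\psi$ is monotone \emph{increasing} and concave, asymptoting to its limit from below, so $\psi'>0$ throughout and your sign argument based on $\psi'\leq 0$ does not apply.

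The paper bypasses any piecewise balancing altogether. With $\alpha=\tfrac{2(n-1)}{m}$ it takes $\varphi=f$ to solve the ODE
\[
f''=\tfrac{\alpha}{2}\,f^{-\alpha-1},\qquad f(0)=1,\ f'(0)=0,
\]
and sets $\psi=h=\tfrac{2}{\alpha}f'$. Integrating once gives ${f'}^2=1-f^{-\alpha}$, from which $h'=f^{-\alpha-1}>0$ and $h\to \tfrac{2}{\alpha}$ as $r\to\infty$. Substituting these identities into the three Ricci expressions, the $M$-direction and the $S^{m-1}$-direction vanish identically and the $\partial_t$-direction is non-negative (it equals a positive multiple of $(m-2)(n+m-1)f^{-\alpha-2}$). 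So there is no transition region to balance and no obstacle of the kind you describe; the ``computational core'' you defer is in fact a one-line verification once the ODE is chosen.
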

	
	\begin{proof}
		Let $\alpha=2\frac{n-1}{m}$ and let $f\colon[0,\infty)\to\R$ be the unique solution of the initial value problem
		\begin{align*}
			f''&=\frac{\alpha}{2}f^{-\alpha-1},\\
			f(0)&=1,\\
			f'(0)&=0.
		\end{align*}
		Further, let $h\colon[0,\infty)\to\R$ be the function
		\[ h=\frac{2}{\alpha}f' \]
		and define the metric $g_{f,h}$ on $[0,\infty)\times S^{m-1}\times M$ by
		\[ g_{f,h}=dt^2+h(t)^2ds_{m-1}^2+f(t)^2g. \]
		By the initial conditions of $f$ we have $h(0)=0$ and $h'(0)=1$, and by the defining equation of $f$ we obtain inductively that at $t=0$ the function $h$ is odd and $f$ is even. Therefore, the metric $g_{f,h}$ defines a smooth metric on $\R^m\times M$.
		
		By Lemma \ref{L:doubly_warped_Ric}, the Ricci curvatures of the metric $g_{f,h}$ are given as follows:
		\begin{align*}
			\Ric(\partial_t,\partial_t)&=-(m-1)\frac{h''}{h}-n\frac{f''}{f},\\
			\Ric(\tfrac{u}{h},\tfrac{u}{h})&=-\frac{h''}{h}+(m-2)\frac{1-{h'}^2}{h^2}-n\frac{h'f'}{hf},\\
			\Ric(\tfrac{v}{f},\tfrac{v}{f})&=-\frac{f''}{f}+\frac{\Ric^g(v,v)-(n-1){f'}^2}{f^2}-(m-1)\frac{h'f'}{hf}\\
			&\geq -\frac{f''}{f}+(n-1)\frac{1-{f'}^2}{f^2}-(m-1)\frac{h'f'}{hf},\\
			\Ric(\partial_t,u)&=\Ric(\partial_t,v)=\Ric(u,v)=0,
		\end{align*}
		where $u\in TS^{m-1}$ and $v\in TM$ are unit vectors with respect to $ds_{m-1}^2$ and $g$, respectively.
		
		Integrating the equation $f''f'=\frac{\alpha}{2}f^{-\alpha-1}f'$ now shows that
		\[ {f'}^2=1-f^{-\alpha}. \]
		Further, we have
		\[ \frac{1-{h'}^2}{h^2}=\frac{\alpha^2}{4}\frac{1-f^{-2\alpha-2}}{1-f^{-\alpha}}\geq \frac{\alpha^2}{4}f^{-\alpha-2},\quad \frac{h''}{h}=-\frac{\alpha(\alpha+1)}{2}f^{-\alpha-2},\quad \frac{h'f'}{hf}=\frac{\alpha}{2}f^{-\alpha-2}. \]
		Using this a calculation shows that all Ricci curvatures are non-negative.
		
		Finally, since $f''>0$, we have $f(t)\to\infty$ as $t\to\infty$, and therefore $f'(t)\to1$ and $h(t)\to\frac{2}{\alpha}$ as $t\to\infty$. This shows that $R^2g_{f,h}$ converges to the cone $C(M,g)$ as $R\to 0$.
	\end{proof}
	
	\begin{remark}
		Similar arguments using the generalisations of \cite{SY91} given in \cite{Wr98} and \cite{Re23} show that $M\times \R^m$ in Theorem \ref{T:tangent_cone_collapsed} can be replaced by $M\times \mathrm{int}(N)$, where $N$ is a compact manifold with boundary that admits a Riemannian metric of positive Ricci curvature such that on the boundary the second fundamental form is non-negative and the induced metric has positive Ricci curvature (e.g.\ if $N=N'\setminus{D^m}^\circ$ and $N'$ admits a core metric).
	\end{remark}

	\bibliographystyle{plainurl}
	\bibliography{References}

\end{document}